\newtheorem{lemma}{Lemma}
\newtheorem{theorem}[lemma]{Theorem}
\newcommand{\cid}{\stackrel{\mbox{\tiny $d$}}{\longrightarrow}} 
\newcommand{\cip}{\stackrel{\mbox{\tiny $p$}}{\longrightarrow}} 
\newcommand{\asc}{\stackrel{\mbox{\tiny $a.s.$}}{\longrightarrow}} 
\newcommand{\varind}[1]{\mathds{1}_{    #1    }  }	
\newcommand{\Ind}[1]{\mathds{1}\mbox{\Large$_{ \{    #1  \}  }$}}
\newcommand{\be}{\begin{equation}}
\newcommand{\ee}{\end{equation}}
\newcommand{\bea}{\begin{eqnarray*}}
\newcommand{\eea}{\end{eqnarray*}}
\newcommand{\ben}{\begin{enumerate}[a)]} 
\newcommand{\bEN}{\begin{enumerate}[1.]}
\newcommand{\bECR}{\begin{enumerate}[(I)]} 
\newcommand{\bESR}{\begin{enumerate}[(i)]} 
\newcommand{\een}{\end{enumerate}}
\newcommand{\eEN}{\end{enumerate}}
\newcommand{\bit}{\begin{itemize}}
\newcommand{\eit}{\end{itemize}}
\newcommand{\Var}{\mathrm{Var}}
\DeclareMathOperator*{\vectorize}{vec}
\DeclareMathOperator*{\argmin}{arg\,min}
\DeclareMathOperator*{\trace}{tr}
\renewcommand*{\vec}{\vectorize\!}
\def\Sn{S\!_n}
\def\Snstar{S\!_n^{\,*}}
\def\SSCM{SSCM} 
\def\hV{\hat{V}}
\def\tZ{\tilde{Z}}
\newcommand{\X}{\mathbb{X}}
\newcommand{\Y}{\mathbb{Y}}
\newcommand{\calA}{\mathcal{A}}
\newcommand{\calI}{\mathcal{I}}
\newcommand{\N}{\mathds{N}}  
\newcommand{\R}{\mathds{R}}
\begin{document}

\title{The spatial sign covariance matrix with unknown location}

\author[A. D\"urre]{Alexander D\"urre}
\author[D. Vogel]{Daniel Vogel}
\author[D. E. Tyler]{David E.~Tyler}

\address{Fakult\"at Statistik, Technische Universit\"at Dortmund, 44221 Dortmund, Germany.}
\email{alexander.duerre@tu-dortmund.de}
\address{Fakult\"at f\"ur Mathematik, Ruhr-Universit\"at Bochum, 44780 Bochum, Germany.}
\email{vogeldts@rub.de}
\address{Department of Statistics, Rutgers University, Piscataway, NJ 08854, USA.}
\email{dtyler@rci.rutgers.edu}

\keywords{
elliptical distribution,
Marcinkiewicz's SLLN, 
spatial median, 
spatial sign 
}

\begin{abstract}
The consistency and asymptotic normality of the spatial sign covariance matrix with unknown location are shown.
Simulations illustrate the different asymptotic behavior when using the mean and the spatial median as location estimator.   

\medskip
\noindent
2010 MSC: 62H12, 62G20, 62H11
\end{abstract}

\maketitle

\section{Introduction}
\label{sec:intro}

We define the \emph{spatial sign of $x \in \R^p$} as $s(x) = x/|x|$ for $x \neq 0$ and $s(0) = 0$, where $|\cdot|$ denotes the Euclidean norm in $\R^p$. Let $X$ be a $p$-dimensional random vector, $p \ge 2$, having distribution $F$. For $t \in \R^p$, we call 
\[
	S(F,t) = E \left\{ s(X-t) s(X-t)^T \right\}
\]
the \emph{spatial sign covariance matrix (SSCM) of the distribution $F$ (or random variable $X$) with location $t$}.  Letting further $\X_n = (X_1,\ldots,X_n)^T$, where $X_1,\ldots,X_n$ represents a random sample from the distribution $F$, we call
\[
		\Sn(\X_n,t)=\frac{1}{n}\sum_{i=1}^n s(X_i-t) s(X_i-t)^T
\] 
the \emph{spatial sign covariance matrix of the sample $\X_n$ with location $t$}. The term spatial sign covariance matrix was coined by \citet*{Visuri2000}, but the estimator has a longer history in the statistics literature. It has excellent robustness properties: its influence function is bounded, and the asymptotic breakdown point is $1/2$ \citep*{Croux2010}. Together with its simplicity, this makes the \SSCM\ a popular scatter estimator in multivariate data analysis. 
%
%

Within the theory of multivariate scatter estimation, affine equivariance plays an important role. Let $\Y_n = \X_n A^T + 1_n b^T$ denote the $n \times p$ data matrix obtained from $\X_n$ by applying the affine linear transformation $x \mapsto A x + b$ to each data point (where $1_n$ denotes the $n$-vector consisting of ones). An \emph{affine equivariant scatter estimator}, $V_n$ say, satisfies 
\be \label{eq:are}
	\hV_n(\Y_n) =  A \hV_n(\X_n) A^T
\ee
for any $b \in \R^p$ and any full rank square matrix $A$, i.e.\ it behaves like the covariance matrix under linear transformations of the data. The problem of robust, affine equivariant scatter estimation has received much attention in the last decades, see e.g. \citet*{Maronna2006} or \citet{Zuo2006} for an overview. However, the \SSCM\ lacks this property. It fulfills the weaker condition of \emph{orthogonal invariance}, that is, it satisfies (\ref{eq:are}) for all orthogonal matrices $A$. 
This is closely related to the fact that, at elliptical distributions, the \SSCM\ shares the eigenvectors (and the ranking of the eigenvalues) with the covariance matrix, but the exact connection between the eigenvalues of the \SSCM\ and covariance matrix is, even under ellipticity, rather tricky. An explicit expression is known only for $p = 2$ \citep[e.g.][]{Duerre2014}. Thus, the \SSCM\ gives information about the \emph{orientation} of the data \citep[cf.][]{Bensmail1996}, and its use has primarily been proposed for analyses that are based on this information only, most notably principal component analysis \citep{Marden1999, Locantore1999, Croux2002, Gervini2008}. Other such applications are direction-of-arrival estimation \citep{Visuri2001a} or testing for sphericity in the elliptical model \citep{Sirkia2009}. The latter makes use of the fact that under the null hypothesis that $X$ is spherical, $s(X)$ is uniformly distributed on the $p$-dimensional unit sphere.

The scatter estimator proposed by \citet{Tyler1987} can be regarded as an affine equivariant version of the \SSCM. It lacks, however, the high breakdown point and the simplicity of the \SSCM, its computation requiring an iterative algorithm. The estimate obtained when stopping the algorithm after a finite number of steps has been considered and called the $k$-step SSCM by \citet{Croux2010}. It keeps the high breakdown point of the SSCM, but has an asymptotic variance close to that of Tyler's estimator.

In this paper, we are concerned with the asymptotic properties of the \SSCM. 
The strong law of large numbers and the central limit theorem immediately yield
\begin{enumerate}[(I)]
\item \label{num:I}
$\Sn(\X_n,t) \asc S(F,t)$ \ and
\item  \label{num:II}
$\sqrt{n} \vec\, \{ \Sn(\X_n,t) - S(F,t)\} \cid N\!_{p^2}( 0,\, W)$, 
\end{enumerate}
for every distribution $F$ and $t \in \R^p$, where $W = \Var(Z)$ with $Z = Z(X,t) = \vec\, \{ s(X-t)s(X-t)^T \}$ and $X\sim F$.

However, the estimator $\Sn(\X_n,t)$ itself is rarely applicable, since the central location $t$ is usually unknown and needs to be estimated. Instead, $\Sn(\X_n,t_n)$ is used, where $t_n = t_n(\X_n)$ is a suitable location estimator. Often, 
(\ref{num:I}) and (\ref{num:II}) are mentioned as a theoretical justification for the use of $\Sn(\X_n,t_n)$, accompanied by a more or less explicit remark that $\Sn(\X_n,t)$ and $\Sn(\X_n,t_n)$ possess the same asymptotic behavior as long as $t_n$ is consistent for $t$ in some suitable sense. 
The purpose of this article is to close this gap and rigorously prove that, under weak conditions on $F$ and $t_n$, assertions (\ref{num:I}) and (\ref{num:II}) still hold true  if $t$ is replaced $t_n$.
We do not study the asymptotic variance and efficiency of the SSCM in this article, but refer the reader to \citet{Magyar2013} and \citet{Duerre2014}. Roughly speaking, the \SSCM\ achieves the same asymptotic efficiency as Tyler's estimator at spherical distributions, which is $1+ 2/p$ relative to the (suitably scaled) sample covariance matrix, but its asymptotic variance may get arbitrarily large for heteroscedastic data.  

The canonical location estimator for the \SSCM\ is the \emph{spatial median} 
\[
	\mu_n = \mu_n(\X_n) = \argmin_{\mbox{\scriptsize $\mu \in \R^p$}} \sum_{i=1}^n | X_i - \mu |.
\]
If the data points do not lie on a straight line and none of them coincides with $\mu_n$, the spatial signs with respect to the spatial median are centered \citep[][p.~228]{Kemperman1987}, i.e.\ $\sum_{i=1}^n s(X_i - \mu_n)  = 0$. Thus the \SSCM\ $\Sn (\X_n, \mu_n)$ is indeed the sample covariance matrix of the spatial signs of the centered observations, if the latter are centered by the spatial median. 
The theoretical counterpart, the \emph{spatial median of the distribution $F$} is 
\[
	\mu(F) = \argmin_{\mbox{\scriptsize $\mu \in \R^p$}} E \left( |X - \mu| -|X|\right).
\]
The spatial median always exists, and, if $F$ is not concentrated on a straight line, it is unique. 
For further details see, e.g., \citet{Kemperman1987}, \citet{Milasevic1987} and \citet{Koltchinskii2000}. If the first moments of $F$ are finite, then the spatial median allows the more descriptive characterization as the minimizing point of $E |X - \mu|$. The spatial median falls within the class of $M$-estimators, for which an elaborate asymptotic theory exists \citep[][Sec.~6]{Huber2009}. See \citet{magyar:2011} for the asymptotic distribution and finite sample efficiencies. The computation of the spatial median is also a thoroughly studied problem \citep[e.g.][]{weiszfeld:2009, gower:1974, vardi:2001}. Alternative names 
are $L_1$ median, mediancentre and space median. For a recent review see \citet{oja:2010}.

An alternative definition of the sample spatial sign covariance matrix with unknown location is
\[
		\Snstar(\X_n,t_n)=\frac{1}{n^*}\sum_{i=1}^n s(X_i-t_n) s(X_i-t_n)^T
\]
with $n^* = \# \{ 1 \le i \le n \mid X_i \neq t_n \}$. This definition is preferable for practical purposes. For instance, the trace of $\Snstar(\X_n,t_n)$ is always 1 (except for the uninteresting case that all observations $X_i$ coincide with $t_n$). The estimator $\Sn(\X_n,t)$, on the other hand, also contains the information how many of the data points $X_i$ coincide with $t$. This information is generally of little interest. More importantly, when $n^* < n$ is observed, it is in most cases not due to properties of the underlying distribution (about which one seeks to draw inference), but is an artefact of the measurement or the location estimator. When $X_i = t_n$, it is usually either caused by a coarse rounding of the data or an estimator $t_n$ that attains one of the data points with positive probability, which is common for many robust estimators, among them the spatial median. 
All asymptotic results of Section \ref{sec:asym} concerning the estimator $\Sn(\X_n,t_n)$ are also true for $\Snstar(\X_n,t_n)$. The difference between both estimators is, except for very small $n$, negligible for continuous population distributions. 

%
%
%
%
A referee raised the question of the general importance of the location estimation problem and pointed out that it can be elegantly circumvented by means of symmetrization. For scatter estimation in general (univariate or multivariate), the need for a prior location estimate is usually regarded as a kind of nuisance, making thorough derivations more involved than, e.g., for the problem of location estimation alone. (We are addressing explicitly this nuisance here.) One way to avoid this problem is to use symmetrized scatter estimators. Any scatter estimator
gives rise to a symmetrized estimator, which is simply the estimate computed from the pairwise differences, instead from the (suitably centered) data. 
To name two well-known univariate examples, the symmetrized version of the mean absolute deviation is known as Gini's mean difference (simply the mean of all pairwise distances). The $Q_n$ scale estimator proposed by \citet{RousseeuwCroux1993}, which is the lower sample quartile of all pairwise distances, can be regarded as a symmetrized version of the median absolute deviation \citep[MAD,][]{Hampel1974}. Roughly speaking, symmetrized estimators tend to be more efficient at the normal model, but less efficient at very heavy tailed distributions and less robust than the original estimator.
Symmetrization is often successfully applied to highly robust estimators to increase their efficiency while retaining a satisfactory degree of robustness. 

Multivariate symmetrized scatter estimators have been considered, e.g., in \citet{Duembgen1998} and \citet*{Sirkia2007}. They play an important role in robust principal component analysis, since they possess the so-called independence property: symmetrized scatter functionals are always diagonal matrices at multivariate distributions with independent margins. See e.g. \citet{Oja2006} or \citet{Tyler2009} for further details.
Particularly the symmetrized version of the \SSCM\ (which we simply want to call SSSCM here) has also been considered in \citet{Visuri2000}. It is denoted by $\rm TCM_2$ there, and the authors call it the \emph{spatial Kendall's $\tau$ covariance matrix}. It is indeed much more efficient than the SSCM at the normal distribution. For instance, at a bivariate spherical normal distribution, where the SSCM and the SSSCM both are consistent for $I_2/2$, the SSCM has an asymptotic relative efficiency (with respect to $\hat\Sigma_n / \trace(\hat\Sigma_n)$) of 50\%, whereas the SSSCM achieves 91\%, which can be deduced from results by \citet{Sirkia2009}.
However, despite their appealing efficiency properties, symmetrized estimators also have a few drawbacks, mainly a higher computational effort, and a less tractable asymptotic variance, making inferential procedures more laborious. The \SSCM, with its simplicity as a major appeal, will certainly be of continuing relevance in statistics.  

The rest of the paper is organized as follows: In Section \ref{sec:asym}, strong and weak consistency of the \SSCM\ are proven under the assumption of strong and weak consistency, respectively, of the location estimator $t_n$ and a moment condition on $|X-t|^{-1}$. Furthermore, the asymptotic normality of $ \Sn(\X_n,t_n)$ is shown, provided $t_n$ converges at the usual $\sqrt{n}$ rate. Section \ref{sec:sim} contains the results of a small simulation study demonstrating the effects of different location estimators and exploring the sensitivity of the convergence of the \SSCM\ on the aforementioned inverse moment condition. All proofs are deferred to the appendix.

\section{Asymptotic theory}
\label{sec:asym}
%

Our first result states conditions on the location estimator $t_n$ and the population distribution $F$ that guarantee weak and strong consistency, respectively, of the spatial sign covariance matrix with unknown location. 
\begin{theorem} \label{th:cons}
If there is an $\alpha \geq 0$ such that 
\begin{enumerate}[(I)]
\item \label{num:cons:I} 
$E |X-t|^{-1/(1+\alpha)}  < \infty$, \ and  
\item \label{num:cons:II}
there is a sequence $(t_n)_{n \in \N}$ of random $p$-vectors that satisfies 
\[
		n^\alpha |t_n - t| \leq T_n  \quad \mbox{a.s.},   
\] 
where $(T_n)_{n \in \N}$ is a sequence of random variables that converges almost surely (if $\alpha > 0$) or converges almost surely to 0 (if $\alpha = 0$), 
\end{enumerate}
then $\Sn(\X_n,t_n) \asc S(F,t)$. If $n^\alpha |t_n - t|$ is bounded in probability (for $\alpha > 0$) or converges in probability to 0 (for $\alpha = 0$), then $\Sn(\X_n,t_n) \cip S(F,t)$.
\end{theorem}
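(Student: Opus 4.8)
The plan is to reduce everything to the already-known strong law (\ref{num:I}) by controlling the discrepancy $\Sn(\X_n,t_n)-\Sn(\X_n,t)$ directly. Writing $g(u)=s(u)s(u)^T$, I would first establish the elementary perturbation inequality
\[
	\| g(u) - g(v) \| \;\le\; \frac{4\,|u-v|}{|u|},
\]
valid for every $u \neq 0$ and every $v \in \R^p$ (including $v=0$, where $g(0)=0$ and the bound reads $1 \le 4$). This follows by splitting $g(u)-g(v) = s(u)\{s(u)-s(v)\}^T + \{s(u)-s(v)\}s(v)^T$, using $|s(u)|,|s(v)| \le 1$ to obtain $\|g(u)-g(v)\| \le 2|s(u)-s(v)|$, and then bounding $|s(u)-s(v)| \le 2|u-v|/|u|$ via the identity $s(u)-s(v) = \{(u-v)|v| + v(|v|-|u|)\}/(|u||v|)$ together with $||u|-|v|| \le |u-v|$.

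Applying this with $u = X_i - t$ and $v = X_i - t_n$, so that $|u-v| = |t_n - t|$, and averaging over $i$ yields the master bound
\[
	\| \Sn(\X_n,t_n) - \Sn(\X_n,t) \|
	\;\le\; 4\,|t_n - t|\cdot\frac{1}{n}\sum_{i=1}^n \frac{1}{|X_i-t|}.
\]
Condition (\ref{num:cons:I}) forces $P(X=t)=0$, so the summands are a.s.\ finite and the bound is meaningful. Since $\Sn(\X_n,t_n) = \Sn(\X_n,t) + \{\Sn(\X_n,t_n) - \Sn(\X_n,t)\}$ and the first term converges by (\ref{num:I}), it remains only to show that the right-hand side above vanishes.

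For the strong-consistency statement I would split according to $\alpha$. When $\alpha = 0$, condition (\ref{num:cons:I}) is exactly $E|X-t|^{-1} < \infty$, so the ordinary SLLN gives $n^{-1}\sum_i |X_i-t|^{-1} \to E|X-t|^{-1}$ a.s., while (\ref{num:cons:II}) gives $|t_n-t|\to 0$ a.s.; the product therefore tends to $0$ a.s. The case $\alpha > 0$ is the crux. Using $n^\alpha|t_n-t| \le T_n$ I rewrite the master bound as
\[
	4\,|t_n-t|\cdot\frac{1}{n}\sum_{i=1}^n \frac{1}{|X_i-t|}
	\;\le\; 4\,T_n\cdot\frac{1}{n^{1+\alpha}}\sum_{i=1}^n \frac{1}{|X_i-t|}.
\]
The key observation is that (\ref{num:cons:I}) says precisely that $E\big(|X-t|^{-1}\big)^{r} < \infty$ with $r = 1/(1+\alpha) \in (0,1)$, which is exactly the moment condition under which the Marcinkiewicz--Zygmund strong law applies to the i.i.d.\ nonnegative variables $|X_i-t|^{-1}$ and yields $n^{-1/r}\sum_i |X_i-t|^{-1} = n^{-(1+\alpha)}\sum_i|X_i-t|^{-1} \to 0$ a.s. As $T_n$ converges a.s.\ and is hence a.s.\ bounded, the product tends to $0$ a.s., which completes the strong-consistency proof via (\ref{num:I}).

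Weak consistency rests on the same master bound: the factor $n^{-(1+\alpha)}\sum_i|X_i-t|^{-1}$ still tends to $0$ a.s.\ (hence in probability) by the argument just given, whereas $n^\alpha|t_n-t|$ is now only assumed bounded in probability (or $o_P(1)$ when $\alpha=0$); the product of a bounded-in-probability sequence with an $o_P(1)$ sequence is $o_P(1)$, and combining this with the weak law for $\Sn(\X_n,t)$ gives $\Sn(\X_n,t_n) \cip S(F,t)$. I expect the main obstacle to be exactly the $\alpha>0$ regime: the naive averaging delivers only $n^{-1}\sum_i|X_i-t|^{-1}$, which need \emph{not} converge under the sub-first-order moment hypothesis, so the whole argument hinges on recognizing that the rate $n^{-\alpha}$ of the location estimator can be absorbed into the partial sum and matched against a Marcinkiewicz--Zygmund moment of order $1/(1+\alpha) < 1$.
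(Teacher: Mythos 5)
Your proof is correct, and it takes a genuinely different --- and somewhat more streamlined --- route than the paper's. The paper does not use a uniform perturbation inequality; instead it computes $\|\Gamma_i(t_n)-\Gamma_i\|$ exactly, which produces $|X_i-t_n|$ (not $|X_i-t|$) in the denominator. That expression is only useful when $X_i-t_n$ is not much shorter than $X_i-t$, so the paper partitions the sample into the indices with $|X_i-t_n|\ge\frac{1}{2}|X_i-t|$ --- where the bound reduces to a constant times $|t_n-t|/|X_i-t|$ and the Kolmogorov/Marcinkiewicz strong laws are applied exactly as in your argument --- and the complementary ``bad'' indices, for which it uses the trivial bound $\|\Gamma_i(t_n)-\Gamma_i\|\le 2$ together with a separate counting argument: the bad observations eventually all lie in a ball $B_\delta$ around $t$ of arbitrarily small $F$-measure, and the fraction of sample points falling in $B_\delta$ is controlled via an $L_1$ argument (weak case) or a direct almost-sure argument (strong case). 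Your observation that $|s(u)-s(v)|\le 2|u-v|/|u|$ holds for \emph{every} $v$ (including $v=0$), with the fixed quantity $|u|=|X_i-t|$ in the denominator, removes the need for the partition and for the entire second half of the paper's proof; what your bound gives up is only a constant factor. The handling of the main term --- absorbing the rate $n^{-\alpha}$ of the location estimator into the partial sum and invoking Marcinkiewicz--Zygmund with exponent $1/(1+\alpha)<1$ --- is identical in both proofs, and is indeed the crux you identify. The only blemish is the remark in your weak-consistency paragraph that $n^{-(1+\alpha)}\sum_i|X_i-t|^{-1}\to 0$ a.s.\ ``still'' holds: for $\alpha=0$ this average converges to $E|X-t|^{-1}$, which need not vanish; but your subsequent product argument ($O_P(1)$ times $o_P(1)$, with the roles of the two factors swapped according to whether $\alpha=0$ or $\alpha>0$) covers that case correctly, so the conclusion stands.
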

We have the following remarks concerning Theorem \ref{th:cons}:
\begin{enumerate}[(I)] 
\item \label{rem:1:I}
The primary assertion of Theorem \ref{th:cons} is the case $\alpha = 0$. If $t_n$ is strongly or weakly consistent for $t$, then $\Sn(\X_n,t_n)$ is strongly or weakly consistent, respectively, for $S(F,t)$ if the first moment of $1/|X-t|$ is finite. Theorem \ref{th:cons} states further that, if there is information on the rate of convergence of $t_n$, i.e., (\ref{num:cons:II}) is fulfilled for some $\alpha > 0$, the assumption on $F$ can be weakened, requiring less than first moments of $1/|X-t|$. Note that condition (\ref{num:cons:I}) of Theorem \ref{th:cons} gets weaker with increasing $\alpha$, whereas (\ref{num:cons:II}) gets stronger. 

\item \label{rem:1:II}
For any reasonable location estimate, we expect $\sqrt{n}(t_n - t)$ to converge in distribution. Then we have weak consistency of $\Sn(X_n,t_n)$ if 
$E |X-t|^{-2/3} < \infty$.
\item \label{rem:1:III}	
The situation is slightly different for strong consistency. If we take the mean as location estimator, i.e.\ $t_n = n^{-1}\sum_{i=1}^n X_i$ and $t = E(X)$, we know by the law of the iterated logarithm that $\sqrt{n}|t_n - t|$ does not converge almost surely, but $n^{\alpha} |t_n - t| \asc 0$ for any $\alpha < 1/2$. Thus 
$E |X - t|^{-2/3-\delta} < \infty$ for some $\delta > 0$ is required for strong consistency.

\item \label{rem:1:IV}
Assumption (\ref{num:cons:I}) of Theorem \ref{th:cons} requires that the probability mass is not too strongly concentrated in the vicinity of $t$. This seems intuitive: for many observations $X_i$ being very close to $t$, the spatial signs $s(X_i - t_n)$ and $s(X_i - t)$ will vastly differ, even if $t_n$ is close to $t$. In this sense, assumption (\ref{num:cons:I}) accounts for the discontinuity of the sign function $s(\cdot)$ at 0. However, it is a very mild condition, it is fulfilled, e.g., if the density of $F$ is bounded at $t$.

\item \label{rem:1:V}
A continuous, elliptical distribution $F$ is characterized by a density $f$ of the form 
\be \label{eq:dens}
		f(x) = \det(V)^{-\frac{1}{2}} g\big\{(x-\mu)^T V^{-1} (x-\mu)\big\}
\ee
for a function $g:[0,\infty) \to [0,\infty)$, a positive definite $p\times p$ matrix $V$ and a $p$-vector $\mu$. The parameter $\mu$ coincides with the spatial median of $F$ as well as the mean, provided the latter exists. Then $E |X-\mu|^{-1/(1+\alpha)}  < \infty$ is fulfilled if 
\[
	g(z) = O\big( z^{\{ 1/(1+\alpha)-p\}/2+\delta  }  \big)   \qquad \mbox{as} \ \ z \to 0
\]
for some $\delta > 0$. In particular, it is always fulfilled if $g$ is bounded at the origin, thus for instance for all normal and elliptical $t$ distributions in any dimension. Note that the boundedness is not a necessary condition. 
\end{enumerate}

%
%
%
%
%
%
%
%
%
%
%
The next result gathers conditions that ensure the asymptotic normality of the SSCM with unknown location.

\begin{theorem} \label{th:an1}
If
\begin{enumerate}[(I)]
\item \label{num:an1:I}
	$ E|t_n-t|^4 = O(n^{-2})$,
\item \label{num:an1:II} 
	$E |X-t|^{-3/2} <\infty$,
\item \label{num:an1:III}
	$E\left\{ \frac{X-t}{|X-t|^2} \right\} =0$ \ and
	$E\left\{ \frac{ (X-t)^{(i)} (X-t)^{(j)} (X-t)^{(k)} }{|X-t|^4 } \right\} =0$ \ for\  $i,j,k=1,\ldots,p$,
\end{enumerate}
 where $(X-t)^{(i)}$ denotes the $i$ \!th component of the random vector $X-t$, then
\[
	\sqrt{n} \vec\, \{ \Sn(\X_n,t_n)-S(F,t) \}   \cid N\!_{p^2}(0, W),
\] 
with $W$ being defined in Section \ref{sec:intro}.
\end{theorem}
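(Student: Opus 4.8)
The plan is to reduce the claim to the already-available central limit theorem (\ref{num:II}) for the known-location estimator by showing that replacing $t$ with $t_n$ is asymptotically negligible at the $\sqrt{n}$ scale. Write $g(x,u)=s(x-u)s(x-u)^T$ and $\tau_n=t_n-t$. Assertion (\ref{num:II}) gives $\sqrt{n}\,\vec\{\Sn(\X_n,t)-S(F,t)\}\cid N\!_{p^2}(0,W)$, the variance $W$ being automatically finite since $g$ has bounded entries. By Slutsky's theorem it therefore suffices to prove that the plug-in error
\[
	D_n=\sqrt{n}\,\vec\{\Sn(\X_n,t_n)-\Sn(\X_n,t)\}=\frac{1}{\sqrt{n}}\sum_{i=1}^{n}\vec\{g(X_i,t_n)-g(X_i,t)\}
\]
converges to $0$ in probability.

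The difficulty is that $\tau_n$ is data-dependent and that $u\mapsto g(x,u)$ is smooth everywhere except at $u=x$, where the sign function jumps; the observations $X_i$ close to $t$ are the delicate ones. First I would record that, as a function of the location argument, the gradient is homogeneous of degree $-1$ and the Hessian of degree $-2$, so $|\nabla g(x,v)|\le C|x-v|^{-1}$ and $|\nabla^2 g(x,v)|\le C|x-v|^{-2}$; moreover assumption (\ref{num:an1:III}) is precisely the statement $E\{\nabla g(X,t)\}=0$, i.e.\ that $t$ is a stationary point of $u\mapsto S(F,u)$. Setting $r_i=|X_i-t|$, I would then split each summand according to whether $r_i>2|\tau_n|$ (the \emph{far} observations, on whose connecting segment $t+s\tau_n$, $s\in[0,1]$, the map stays at distance $\ge r_i/2$ from the singularity) or $r_i\le 2|\tau_n|$ (the \emph{near} observations).

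On the far observations a first-order Taylor expansion with second-order remainder gives $g(X_i,t_n)-g(X_i,t)=\nabla g(X_i,t)\cdot\tau_n+R_i$ with $|R_i|\le C|\tau_n|^2 r_i^{-2}$. Summing, the leading term is $\sqrt{n}\,\tau_n\cdot\tfrac1n\sum_i\nabla g(X_i,t)$; by (\ref{num:an1:III}) and the strong law (note $E|X-t|^{-3/2}<\infty$ forces $E|X-t|^{-1}<\infty$ by Jensen, so $\nabla g(X,t)$ is integrable) the average tends to $0$ almost surely, while $\sqrt{n}\,\tau_n=O_p(1)$ by (\ref{num:an1:I}); an $O_p(1)\cdot o_p(1)$ argument makes it vanish. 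The remaining three pieces — the far remainder $\tfrac1{\sqrt n}\sum_{r_i>2|\tau_n|}|R_i|$, the far leading correction on $\{r_i\le2|\tau_n|\}$, and the crude bound $2\,\#\{i:r_i\le2|\tau_n|\}/\sqrt{n}$ on the near observations — can all be controlled by the elementary inequalities $\mathds{1}_{\{r\le2|\tau|\}}\le(2|\tau|)^{3/2}r^{-3/2}$, $\;r^{-1}\mathds{1}_{\{r\le2|\tau|\}}\le(2|\tau|)^{1/2}r^{-3/2}$ and $\;r^{-2}\mathds{1}_{\{r>2|\tau|\}}\le(2|\tau|)^{-1/2}r^{-3/2}$ together with $M_n=\tfrac1n\sum_i r_i^{-3/2}\to E|X-t|^{-3/2}<\infty$ a.s. (finite by (\ref{num:an1:II})). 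Each reduces to $C\sqrt{n}\,|\tau_n|^{3/2}M_n$, which is $O_p(n^{-1/4})=o_p(1)$ because $|\tau_n|=O_p(n^{-1/2})$ by (\ref{num:an1:I}) and $M_n=O_p(1)$. Collecting the pieces gives $D_n\cip0$ and hence the theorem.

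The main obstacle is the interplay between the data-dependence of $\tau_n$ and the non-smoothness of $s(\cdot)$ at the origin: one cannot simply linearise globally, since for observations within distance $|\tau_n|$ of $t$ the Taylor remainder $|\tau_n|^2 r_i^{-2}$ is not even bounded. The device that resolves this is the split at radius $2|\tau_n|$ combined with the fractional moment assumption (\ref{num:an1:II}): the exponent $3/2$ is calibrated so that every error piece enters at order $\sqrt{n}\,|\tau_n|^{3/2}=O_p(n^{-1/4})$, which vanishes, whereas a weaker inverse-moment assumption would leave a non-negligible contribution. Assumption (\ref{num:an1:III}) plays the complementary conceptual role of ensuring that estimating the location has no first-order effect, so that the limiting covariance remains $W$ rather than acquiring a location-estimation correction.
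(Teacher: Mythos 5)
Your proof is correct, but it takes a genuinely different route from the paper's. The paper avoids Taylor expansion altogether: it starts from the exact algebraic identity
$\Gamma_i(t_n)=\Gamma_i+|X_i|^{-2}\,(t_nt_n^T-X_it_n^T-t_nX_i^T)+|X_i|^{-2}(2X_i^Tt_n-t_n^Tt_n)\,\Gamma_i(t_n)$
with $\Gamma_i(u)=s(X_i-u)s(X_i-u)^T$, substitutes the identity once more into its own last term, and thereby writes $\sqrt{n}\{\Sn(\X_n,t_n)-\Sn(\X_n,t)\}$ as a sum of ten explicit terms. The terms linear in $t_n$ are then handled exactly as in your leading term --- $(\sqrt{n}\,t_n)$, bounded in probability by (\ref{num:an1:I}), times an average that vanishes a.s.\ by (\ref{num:an1:III}) and Kolmogorov's SLLN --- while every term carrying $t_n$ to a power of two or more is bounded by powers of $\sqrt{n}\,|t_n|$ times expressions such as $n^{-3/2}\sum_i|X_i-t|^{-2}$ or $n^{-2}\sum_i|X_i-t|^{-3}$, which tend to zero a.s.\ by Marcinkiewicz's SLLN under precisely the moment condition (\ref{num:an1:II}); a separate lemma disposes of the indices with $X_i=t_n$. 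Your near/far split at radius $2|\tau_n|$ together with the interpolation inequalities through $r^{-3/2}$ replaces both the Marcinkiewicz step and that lemma (the crude bound on near observations absorbs any indices with $X_i=t_n$), and it makes the calibration of the exponent $3/2$ and the role of (\ref{num:an1:III}) as a ``no first-order effect'' condition more transparent; the price is that you must justify the homogeneity bounds on the first and second derivatives of $u\mapsto s(x-u)s(x-u)^T$ and the validity of the expansion along the whole segment, which the exact identity sidesteps. One cosmetic point: assumption (\ref{num:an1:III}) is sufficient for, but not literally equivalent to, the vanishing of the expected gradient; since only that implication is used, nothing is lost.
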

Assumption (\ref{num:an1:III}) of Theorem \ref{th:an1} imposes some form of symmetry of the distribution $F$ around the point $t$. It is fulfilled, e.g., if $(X-t) \sim -(X-t)$, thus in particular for elliptical distributions with center $t$.
If $F$ is not symmetric around $t$, asymptotic normality can nevertheless be shown, but we require $\Sn(\X_n,t)$ and $t_n$ to converge jointly, and the asymptotic covariance matrix of $\Sn(\X_n,t_n)$ does in general not coincide with $W$. This is the situation of our last theorem. 
\begin{theorem} \label{th:an2}
If 
\begin{enumerate}[(I)]
\item \label{num:an2:0}
	$ E|t_n-t|^4 = O(n^{-2})$,
\item \label{num:an2:II}
$E|X-t|^{-3/2} < \infty$, and
\item \label{num:an2:I}
$ \displaystyle 
\sqrt{n}
\begin{pmatrix}
t_n - t\\
\vec\, \{ \Sn(\X_n,t) - S(F,t)\}
\end{pmatrix} \cid  N\!_{p + p^2} (0, \, \Xi)$ \ \ 
for a symmetric $p(p+1) \times p(p+1)$ matrix $\Xi$,
\end{enumerate}
then 
\[
		\sqrt{n} \vec \, \{ \Sn(\X_n,t_n) - S(F,t) \}   \cid  N\!_{p^2}(0, A \Xi A^T)
\]
with $A = (B , I_{p^2}) \, \in \R^{p^2\times p(p+1)}$, where $I_{p^2}$ is the $p^2$-dimensional identity matrix and $B \in \R^{p^2\times p}$ is given by
\[
	B \ = \ 2 E \left[ \frac{\{(X-t) \otimes (X-t)\} (X-t)^T }{|X-t|^4} \right]
	\, - \, E \left\{ \frac{X-t}{|X-t|^2}\right\} \otimes I_p \, - \, I_p \otimes E \left\{ \frac{X-t}{|X-t|^2}\right\}.
\]
\end{theorem}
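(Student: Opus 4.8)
The plan is to establish a stochastic linearization of $\Sn(\X_n,t_n)$ about the true location $t$ and then read off the limit from the joint convergence~(\ref{num:an2:I}). Writing $h=t_n-t$ and $m(u)=\vec\,\{s(u)s(u)^T\}\in\R^{p^2}$, the elementary decomposition
\[
  \sqrt n\,\vec\,\{\Sn(\X_n,t_n)-S(F,t)\}
  =\sqrt n\,\vec\,\{\Sn(\X_n,t)-S(F,t)\}
  +\frac{1}{\sqrt n}\sum_{i=1}^n\bigl\{m(X_i-t_n)-m(X_i-t)\bigr\}
\]
splits the target into the centred term appearing in~(\ref{num:an2:I}) and an increment driven by $h$. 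The goal is to show that the increment equals $B\sqrt n\,h+o_P(1)$; granting this, the left-hand side becomes
\[
  A\,\sqrt n\begin{pmatrix}t_n-t\\ \vec\,\{\Sn(\X_n,t)-S(F,t)\}\end{pmatrix}+o_P(1),
  \qquad A=(B,\,I_{p^2}),
\]
so that the continuous mapping theorem applied to the linear map $A$, together with Slutsky's lemma for the $o_P(1)$ term, turns the $N_{p+p^2}(0,\Xi)$ limit of~(\ref{num:an2:I}) into the claimed $N_{p^2}(0,A\Xi A^T)$.

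For the linearization I would differentiate $u\mapsto m(u)$: its Jacobian $J(u)\in\R^{p^2\times p}$ has entries $(\delta_{ik}u_j+u_i\delta_{jk})/|u|^2-2u_iu_ju_k/|u|^4$, and a direct comparison of indices shows that the matrix $B$ of the statement is exactly $-E\{J(X-t)\}$, the two Kronecker summands of $B$ matching the $\delta$-terms of $J$ and the fourth-order summand matching the last term. Since $E|X-t|^{-3/2}<\infty$ implies $E|X-t|^{-1}<\infty$ by~(\ref{num:an2:II}), both $E\{J(X-t)\}$ and $B$ exist, and by the strong law of large numbers $-n^{-1}\sum_i J(X_i-t)\asc B$. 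Because $\sqrt n\,h=O_P(1)$ by~(\ref{num:an2:I}), replacing this empirical average by its limit $B$ costs only $o_P(1)$, so everything reduces to controlling the second-order remainder $r_i=m(X_i-t-h)-m(X_i-t)+J(X_i-t)h$ inside $n^{-1/2}\sum_i r_i$.

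The main obstacle is precisely this remainder, because $J$ grows like $|u|^{-1}$ and the Hessian of $m$ like $|u|^{-2}$ near the singularity of the sign function, so a naive second-order Taylor bound would require the \emph{unavailable} moment $E|X-t|^{-2}$. The remedy is to split each summand according to whether $\rho_i:=|X_i-t|$ is small or large relative to $|h|$. On $\{\rho_i>2|h|\}$ the segment joining $X_i-t$ and $X_i-t-h$ stays in $\{|u|\ge\rho_i/2\}$, giving $|r_i|\lesssim|h|^2\rho_i^{-2}$, whereupon the pointwise inequality $\rho_i^{-2}\le(2|h|)^{-1/2}\rho_i^{-3/2}$ valid on this set reduces the contribution to $C\,|h|^{3/2}\,n^{-1/2}\sum_i\rho_i^{-3/2}$. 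On $\{\rho_i\le2|h|\}$ one uses only that $m$ is bounded and that $|J(X_i-t)h|\lesssim|h|\rho_i^{-1}$, together with $\mathds{1}\{\rho_i\le2|h|\}\le(2|h|/\rho_i)^{q}$ for suitable $q\in\{1/2,3/2\}$, to obtain the \emph{same} bound $C\,|h|^{3/2}\,n^{-1/2}\sum_i\rho_i^{-3/2}$. Thus the inverse-moment exponent $3/2$ in~(\ref{num:an2:II}) is exactly what balances the two regimes.

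Finally $n^{-1/2}\sum_i\rho_i^{-3/2}=\sqrt n\cdot n^{-1}\sum_i\rho_i^{-3/2}=O_P(\sqrt n)$ by the law of large numbers, while $E|h|^{3/2}\le(E|t_n-t|^4)^{3/8}=O(n^{-3/4})$ by~(\ref{num:an2:0}) and Lyapunov's inequality, so that $|h|^{3/2}=o_P(n^{-1/2})$. Since all the bounds above hold pathwise for fixed data and $h$, the product is $o_P(1)$ irrespective of the dependence between $t_n$ and the sample, which delivers the remainder estimate and hence the expansion and the theorem. I would remark that this is the same remainder analysis that underlies Theorem~\ref{th:an1}, whose symmetry condition~(\ref{num:an1:III}) is nothing but the requirement $B=0$; in that case the location estimate contributes nothing to first order and the limit reduces to $W$.
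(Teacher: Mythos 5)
Your proposal is correct and arrives at the theorem by the same overall strategy as the paper --- linearize $\Sn(\X_n,t_n)$ in $t_n-t$ around the fixed-location estimator, identify the coefficient matrix $B$ as (minus) the expected Jacobian of $u\mapsto \vec\,\{s(u)s(u)^T\}$, and then apply the continuous mapping theorem to the joint limit in (III) --- but the mechanics of the linearization are genuinely different. The paper never invokes differentiability: it uses the exact algebraic identity $\Gamma_i(t_n)=\Gamma_i+|X_i|^{-2}(t_nt_n^T-X_it_n^T-t_nX_i^T)+|X_i|^{-2}(2X_i^Tt_n-|t_n|^2)\Gamma_i(t_n)$, applies it twice to expand $\sqrt{n}\{\Sn(t_n)-\Sn\}$ into ten terms, keeps the three that are linear in $t_n$ (whose empirical coefficients converge to $B$ by Kolmogorov's SLLN), and kills the seven higher-order terms one by one with Marcinkiewicz's SLLN, using $E|X-t|^{-3/2}<\infty$ through the implied conditions $E(|X|^{-2})^{2/3}<\infty$ and $E(|X|^{-3})^{1/2}<\infty$; the case $X_i=t_n$ is disposed of separately via Lemma \ref{lem:1}. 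You instead do a genuine second-order Taylor expansion and control the remainder by splitting on $\{|X_i-t|>2|t_n-t|\}$ versus its complement, interpolating both regimes down to the common bound $C|t_n-t|^{3/2}|X_i-t|^{-3/2}$; combined with $E|t_n-t|^{3/2}=O(n^{-3/4})$ from (I) and Lyapunov, this gives the $o_P(1)$ remainder. Your route is more self-contained (no Marcinkiewicz, no exact identity) and makes transparent why the exponent $3/2$ in (II) is exactly the right one, and it absorbs the atoms $X_i=t_n$ into the bounded regime without needing Lemma \ref{lem:1}; the paper's route avoids any smoothness argument and reduces each remainder term to a one-line citation. Your closing observation that condition (III) of Theorem \ref{th:an1} is precisely the statement $B=0$ is also the correct way to see Theorem \ref{th:an1} as the special case in which the location estimate contributes nothing to first order.
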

Again, the assumptions are rather weak. If the fixed-location SSCM and the location estimator converge individually, the joint convergence (\ref{num:an2:I}) is usually also fulfilled, although a thorough proof may be tedious. 
For the sample mean, it can be seen fairly easily, and hence Theorem \ref{th:an2} implies that, e.g., the SSCM with sample mean location is asymptotically normal for any distribution with finite second moments and bounded density. 

We have two closing remarks:
\begin{enumerate}[(I)] 
\item 
In all convergence results in this section, the population distribution $F$ is completely arbitrary except for the moment condition on $1/|X - t|$, in particular no continuity is required. If we assume $F$ to be continuous, condition (\ref{num:an1:I}) of Theorem \ref{th:an1} can be weakened to the boundedness in probability of $\sqrt{n}(t_n-t)$, and in Theorem \ref{th:an2} it can be dropped altogether. The connection is given by Lemma \ref{lem:1} in the appendix. However, the continuity of $F$ is not an essential assumption for these asymptotic statements to hold, and it can, e.g., be exchanged for uniform boundedness of the fourth moments of $\sqrt{n}(t_n-t)$, which is, while being far from strict, a condition often fulfilled and easy to verify. 

\item
The results of this section should be compared to the analogous ones for Tyler's shape matrix \citep[][Sec.~4]{Tyler1987}. The conditions on the population distribution $F$ and the location estimate $t_n$ are similar. 
\end{enumerate}

\section{Simulations}
\label{sec:sim}

The simulations section has two parts: in Subsection \ref{subsec:vs}, we want to get an impression how well, for several $n$ and different population distributions, the distribution of $\sqrt{n} \{ \Sn(\X_n,t_n) - S(F,t) \}$ is approximated by its Gaussian limit. We study in particular the differences when using the mean and the spatial median as location estimator. In Subsection \ref{subsec:singularity}, we consider distributions with unbounded densities to examine what happens near the limit case of the inverse moment condition of Theorem \ref{th:cons}.
\subsection{Spatial median versus mean}
\label{subsec:vs}
We sample from a bivariate, centered normal distribution with covariance matrix 
\be \label{eq:Sigma}
	\Sigma = 
	\begin{pmatrix}
		1 & 1/2 \\
	  1/2 & 1 \\
	\end{pmatrix}
\ee	
and from a bivariate, centered, elliptical $t$-distribution with two degrees of freedom and the same shape matrix $\Sigma$. The corresponding spatial sign covariance matrix is 
\[
	S(F,0) = 
	\begin{pmatrix}
		1/2 & 0.13397 \\
	  0.13397 & 1/2 \\
	\end{pmatrix},
\]	
cf.~\citet{Duerre2014}. With the variance of the $t_2$ distribution not being finite, we have that, when using the mean as location estimator, the sequence $\sqrt{n} (t_n-t)$ does not converge. Hence we expect the SSCM not to be asymptotically normal in this case. 

 
For both distributions and for various sample sizes, we compute the SSCM with mean and spatial median as location estimator. For each setting, 100,000 samples are evaluated, which yields a fairly precise Monte-Carlo approximation of the actual distribution of the SSCM.
\begin{figure}[t]
	\centering
		\includegraphics[width=\textwidth]{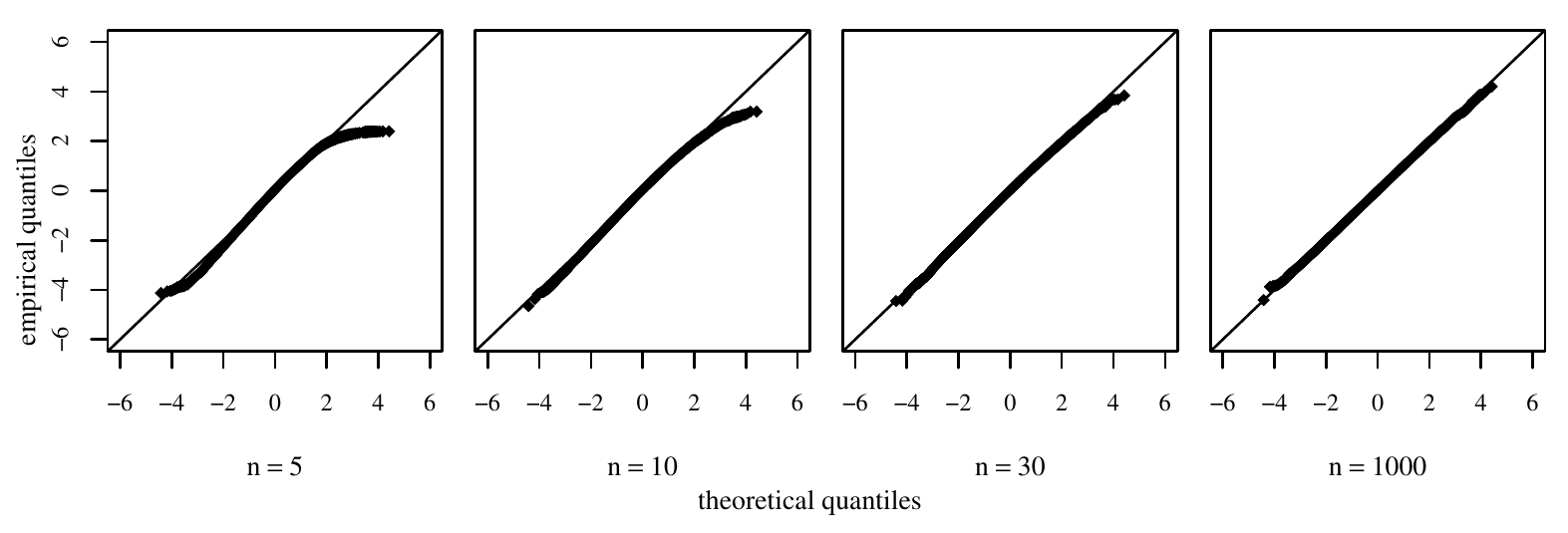}
		\includegraphics[width=\textwidth]{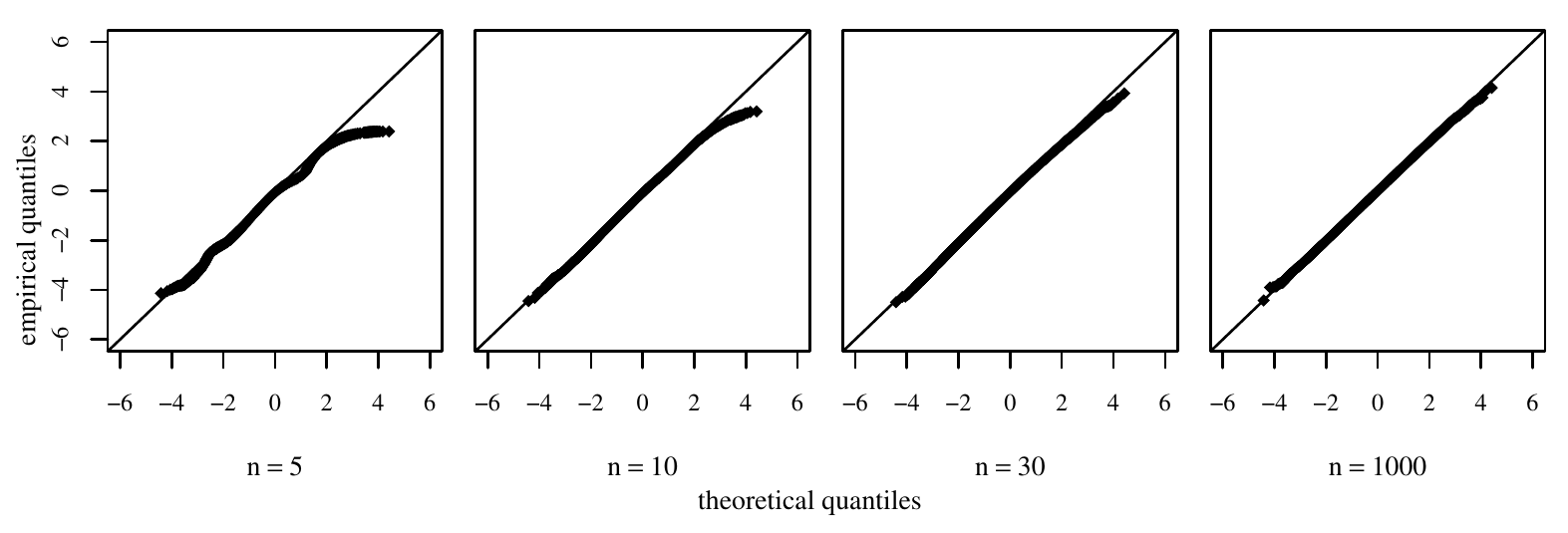}
	\caption{QQ plots for the off-diagonal element $[\!\sqrt{n} \{ \Sn(\X_n,t_n) - S(F,t)\}]_{1,2}$ at the normal distribution with covariance (\ref{eq:Sigma}), when using mean (top row) and spatial median (bottom row) as location estimator.}
	\label{fig:1}
\end{figure}
In Figure \ref{fig:1} we see QQ plots of the off-diagonal element 
$\left[ \sqrt{n} \{ \Sn(\X_n,t_n) - S(F,t) \} \right]_{1,2}$ for the normal population distribution and $n = 5, 10, 30, 1000$. The simulated distribution is plotted against the asymptotic distribution, which is also given in \citet{Duerre2014}. The pictures in the first row (mean) and second row (spatial median) are very similar, and we note very little difference for the two location measures under normality. We suspect that the bumpiness of the QQ plot for the spatial median for $n=5$ (lower left corner) is due to the spatial median's tendency to coincide with one of the data points.
For both estimators, we find the distribution of $\left[ \sqrt{n} \{ \Sn(\X_n,t_n) - S(F,t) \} \right]_{1,2}$ to have light tails for $n =  5$ (left column). The reason is the bounded range $[-0.5,0.5]$ of the estimate $\{\Sn(\X_n,t_n)\}_{1,2}$. If $n$ is very small, this boundedness is also visible in the QQ plots of the centered and $\sqrt{n}$-scaled estimator. The asymmetry (lighter upper tail) is due to the true value 0.134 not lying in the center of the possible range. 
However, this departure from normality quickly vanishes as $n$ increases, and the normal limit generally provides a very good approximation for fairly moderate sample sizes.
%
%


\begin{figure}[t]
\centering
	\includegraphics[width=1\textwidth]{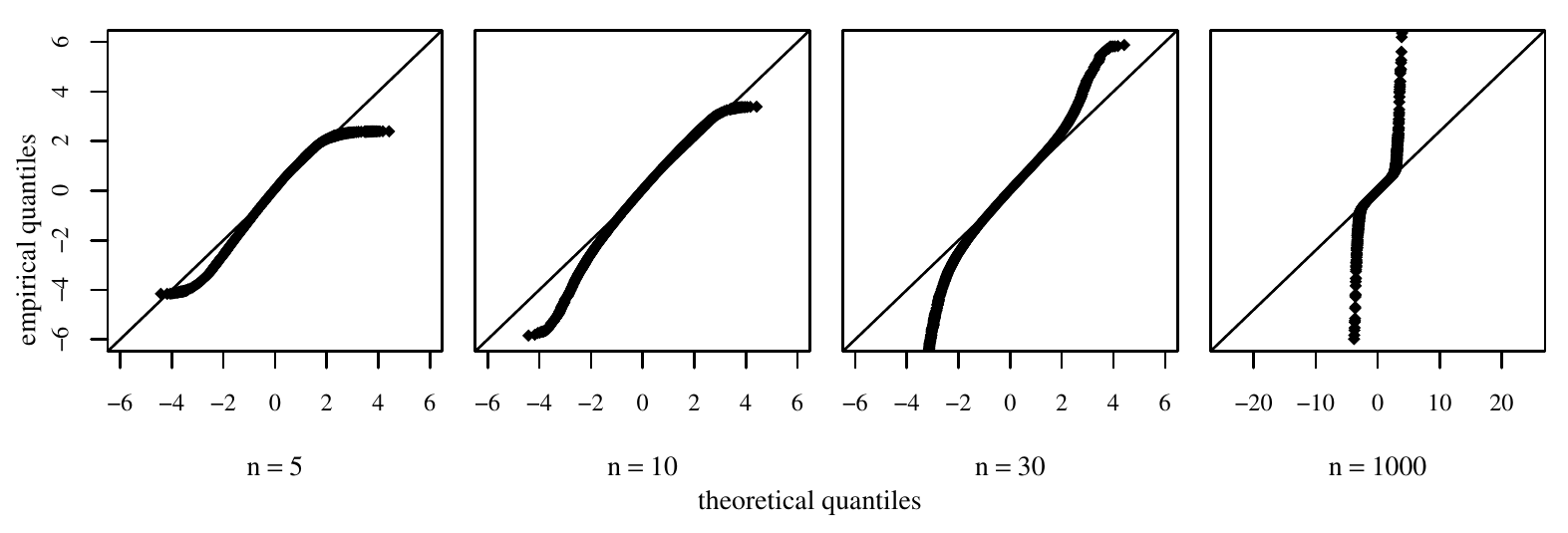}
	\includegraphics[width=1\textwidth]{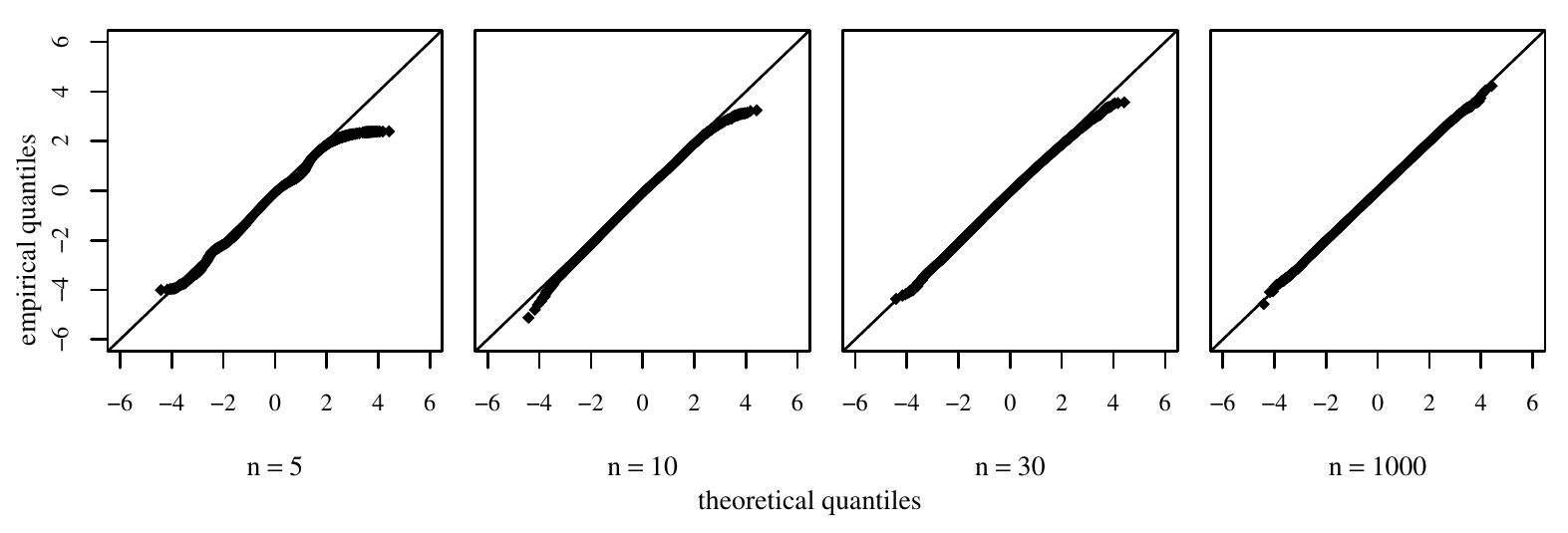}
\caption{QQ plots for $[\!\sqrt{n} \{ \Sn(\X_n,t_n) - S(F,t)\}]_{1,2}$ at $t_2$ distribution with shape matrix (\ref{eq:Sigma}), when using mean (top row) and spatial median (bottom row) as location estimator.}
\label{fig:2}
\end{figure}
Figure \ref{fig:2} shows analogous results for the $t_2$ distribution. Here we observe a qualitatively different behavior of the SSCM depending on the location estimate. In case of the mean (first row), the off-diagonal element $[\!\sqrt{n} \{ \Sn(\X_n,t_n) - S(F,t)\}]_{1,2}$ is clearly non-normal with heavy tails, which persist also for large $n$. As for the spatial median, the results for normal and $t_2$ distribution are very similar: the second row of Figure \ref{fig:2} is almost identical to the second row of Figure \ref{fig:1}.

Additional to the small bivariate example, we present some simulation results for higher dimensions. The results for various $n$ and $p$ are summarized in Tables \ref{tab:1} (normal distribution) and \ref{tab:2} (for the $t_2$ distribution). The set-up is slightly different: the true covariance is $I_p$, corresponding to $S(F,0) = p^{-1} I_p$, and we do not concentrate on a single element, but consider the $L_2$ distance of the matrix estimate to the true $S(F,0)$. The averages of $n|| \Sn(\X_n,t_n) - p^{-1} I_p||^2$ (based on 10,000 runs in each setting) are given. Note that the matrix distance does not blow up as $p$ increases. But this is not surprising considering that $||\Sn(\X_n,t_n)||^2$ is bounded by 1 in any dimension $p$. 
Furthermore, the \SSCM\ with known location is included in the comparison. For instance, at Table \ref{tab:1} we observe observe that the loss for not knowing the location at is about 25\% for $n = 5$  and about 3\% for $n = 30$. 
In contrast, it appears to make practically no difference, in case of normality, which location estimator is chosen. This is in line with the observation that the mean and the spatial median have a similar efficiency at high-dimensional spherical normal distributions, cf.\ e.g.\ \citet{magyar:2011}.

We have also carried out simulations for a variety of other elliptical distributions, with different generators $g$ and different shape matrices $V$. 
The general picture throughout is the same that is conveyed by the examples here: choosing mean or spatial median as location estimate makes little difference in situations where both estimators are root-$n$-consistent. (The differences are more pronounced for ``strongly shaped'' matrices $V$, where the spatial median is relatively less efficient.) For distributions without finite second moments, taking the sample mean substantially impairs the sample SSCM.

\begin{table}[t]
\begin{tabular}{l|ccc|ccc|ccc|ccc}
\multirow{2}{*}{\diagbox{p}{n}}&\multicolumn{3}{c|}{5}&\multicolumn{3}{c|}{10}&\multicolumn{3}{c|}{30}&\multicolumn{3}{c}{1000}\\
 & kn & mean & med & kn & mean & med & kn & mean & med & kn & mean & med\\
\hline
10	 & 0.901 & 1.090 & 1.074 & 0.900 & 0.983 & 0.979 & 0.899 & 0.924 & 0.923 & 0.897 & 0.897 & 0.897\\
50	 & 0.981 & 1.217 & 1.213 & 0.980 & 1.085 & 1.084 & 0.979 & 1.012 & 1.012 & 0.980 & 0.981 & 0.981\\
200	 & 0.995 & 1.241 & 1.241 & 0.995 & 1.104 & 1.104 & 0.995 & 1.029 & 1.029 & 0.995 & 0.996 & 0.996\\
1000 & 0.999 & 1.248 & 1.248 & 0.999 & 1.110 & 1.110 & 0.999 & 1.033 & 1.033 & 0.999 & 1.000 & 1.000\\
\end{tabular}
\caption{Normal distribution; average of $n ||\Sn - p^{-1} I_p||^2$ (10,000 runs) for different samples $n$, dimensions $p$ when the location is known (kn) or estimated by the mean or the spatial median (med).}
\label{tab:1}
\end{table}
\begin{table}[t]
\begin{tabular}{l|ccc|ccc|ccc|ccc}
\multirow{2}{*}{\diagbox{p}{n}}&\multicolumn{3}{c|}{5}&\multicolumn{3}{c|}{10}&\multicolumn{3}{c|}{30}&\multicolumn{3}{c}{1000}\\
 & kn & mean & med & kn & mean & med & kn & mean & med & kn & mean & med\\
\hline
10	 & 0.900 & 1.460 & 1.084 & 0.896 & 1.524 & 0.984 & 0.899 & 1.629 & 0.926 & 0.897 & 1.576 & 0.898\\
50	 & 0.981 & 1.649 & 1.222 & 0.980 & 1.711 & 1.090 & 0.980 & 1.889 & 1.015 & 0.980 & 1.561 & 0.981\\
200	 & 0.995 & 1.700 & 1.249 & 0.995 & 1.772 & 1.110 & 0.995 & 1.923 & 1.031 & 0.995 & 1.858 & 0.996\\
1000 & 0.999 & 1.705 & 1.256 & 0.999 & 1.753 & 1.116 & 0.999 & 1.930 & 1.036 & 0.999 & 1.662 & 1.000\\
\end{tabular}
\caption{$t_2$ distribution; average of $n ||\Sn - p^{-1} I_p||^2$ (10,000 runs) for different $n$ and $p$ when the location is known (kn) or estimated by the mean or the spatial median (med).}
\label{tab:2}
\end{table}

%
%
%
%
%
%

\subsection{Singularity distribution}
\label{subsec:singularity}
The second goal of our small simulation study is to assess the sensitivity of the convergence of $\Sn(\X_n,t_n)$ with respect to condition (\ref{num:cons:I}) of Theorem \ref{th:cons}, i.e., 
the probability mass concentration around $t$. For that purpose, we consider the $p$-variate elliptical distribution $F_{\gamma,p}$, $\gamma > 0 $, with density $f_{\gamma,p}$ given by 
$\mu = 0$, 
$V = I_p$ and 
\[
	g_{\gamma,p}(z) = c_{\gamma,p} z^{\gamma-p/2} \varind{[0,1]}(z), 
\] 
cf.~(\ref{eq:dens}).
The factor $c_{\gamma,p} = \gamma \Gamma(p/2)/\pi^{p/2}$ scales the corresponding $p$-variate density $f_{\gamma,p}$ to 1. 
If $X \sim F_{\gamma,p}$, the norm $|X|$ has, for any $p$, the density
\[
	h_\gamma(z) = 2 \gamma z^{2\gamma - 1} \varind{[0,1]}(z).
\]
The densities $f_{\gamma,p}$ and $h_\gamma$ are well defined for any $\gamma > 0$. A smaller value of $\gamma$ corresponds to a stronger singularity at the origin, and $\gamma = 0$ constitutes the limit case, in which the densities are not integrable. 
Furthermore, if $-\beta/2 < \gamma$, we have $E|X|^{\beta} < \infty$. Thus, using the mean as location estimator, Theorem \ref{th:cons} grants strong consistency for $\gamma > 1/3$.

In the simulations we consider $\gamma = 0.05, 0.1, 0.15, \ldots, 0.45$ and several sample sizes $n$ ranging from 10 to 20000. 
Figures \ref{fig:3} 
illustrates the $L_1$ consistency of the SSCM depending on $\gamma$ for $p = 2$ and $p = 10$: the absolute error $\varepsilon$ of an off-diagonal element of the SSCM is plotted against $\gamma$ and $n$ with the mean (left) and the spatial median (right) as location estimator. Each grid point is the average of 10,000 repetitions. The smaller errors for $p=10$ (bottom row) are due to the fact that we examine one singly entry of the SSCM, the magnitude of which is of order $p^{-2}$ as $p$ increases.  

For the mean, we observe a decline of the absolute error with increasing $n$ for all $\gamma$ considered, but the convergence is very slow for small values of $\gamma$. Also here, the SSCM with the spatial median behaves qualitatively different: the decay of the absolute error appears to be equally fast for all $\gamma$. This is a plausible observation. For the univariate median, the asymptotic efficiency is proportional to the squared density at the true median, and the boundedness of the density at the median is necessary to prove asymptotic normality. In case of a singularity, the univariate sample median is known to converge faster than at the usual $\!\sqrt{n}$ rate. The situation is similar for the spatial median: the efficiency increases with higher probability mass concentration at the true spatial median $\mu$, and the boundedness of the density is also a standard assumption for the asymptotic normality of the sample spatial median \citep[see e.g.][]{mottonen:nordhausen:oja:2010,magyar:2011}.
The sensitivity of the SSCM $\Sn(\X_n,t_n)$ to a singularity at $t$, due to discontinuity of the spatial sign, and the increased efficiency of the spatial median in such a situation are opposing effects that seem to nullify each other. A thorough theoretical investigation of this situation seems to be an open problem.  

\begin{figure}[ht]
	\centering
		\includegraphics[width=0.45\textwidth]{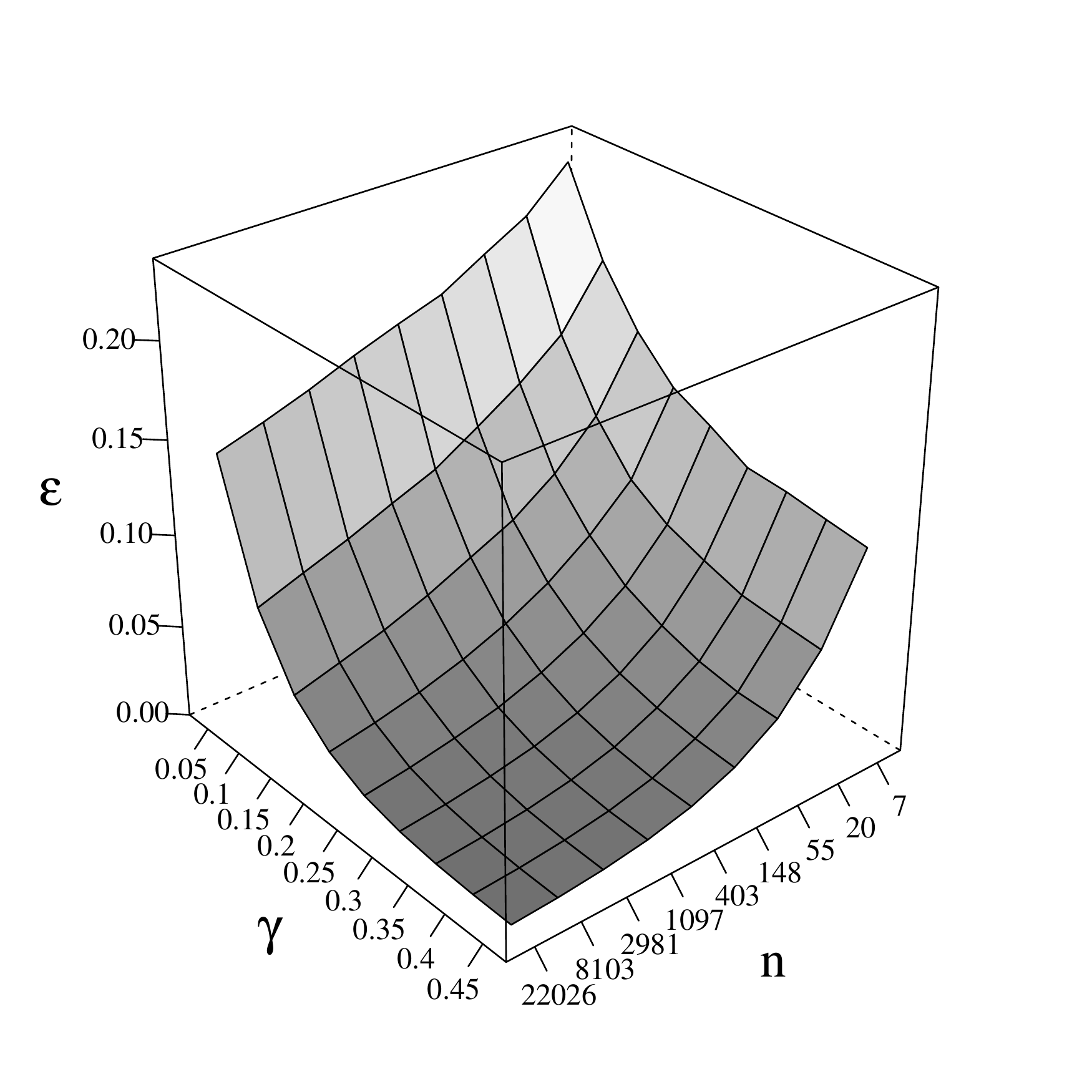}
		\includegraphics[width=0.45\textwidth]{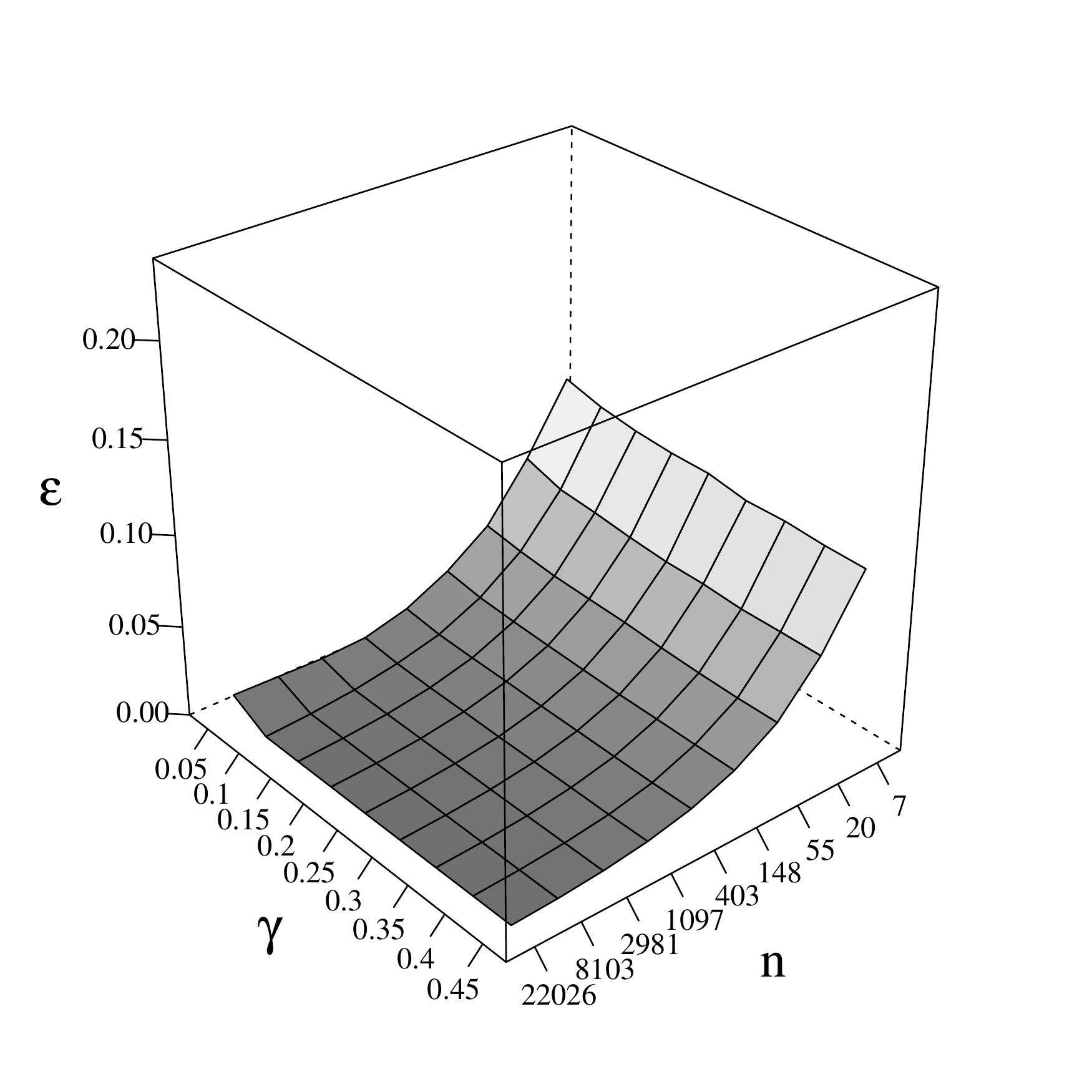}

		\vspace{-3.ex}
		\includegraphics[width=0.45\textwidth]{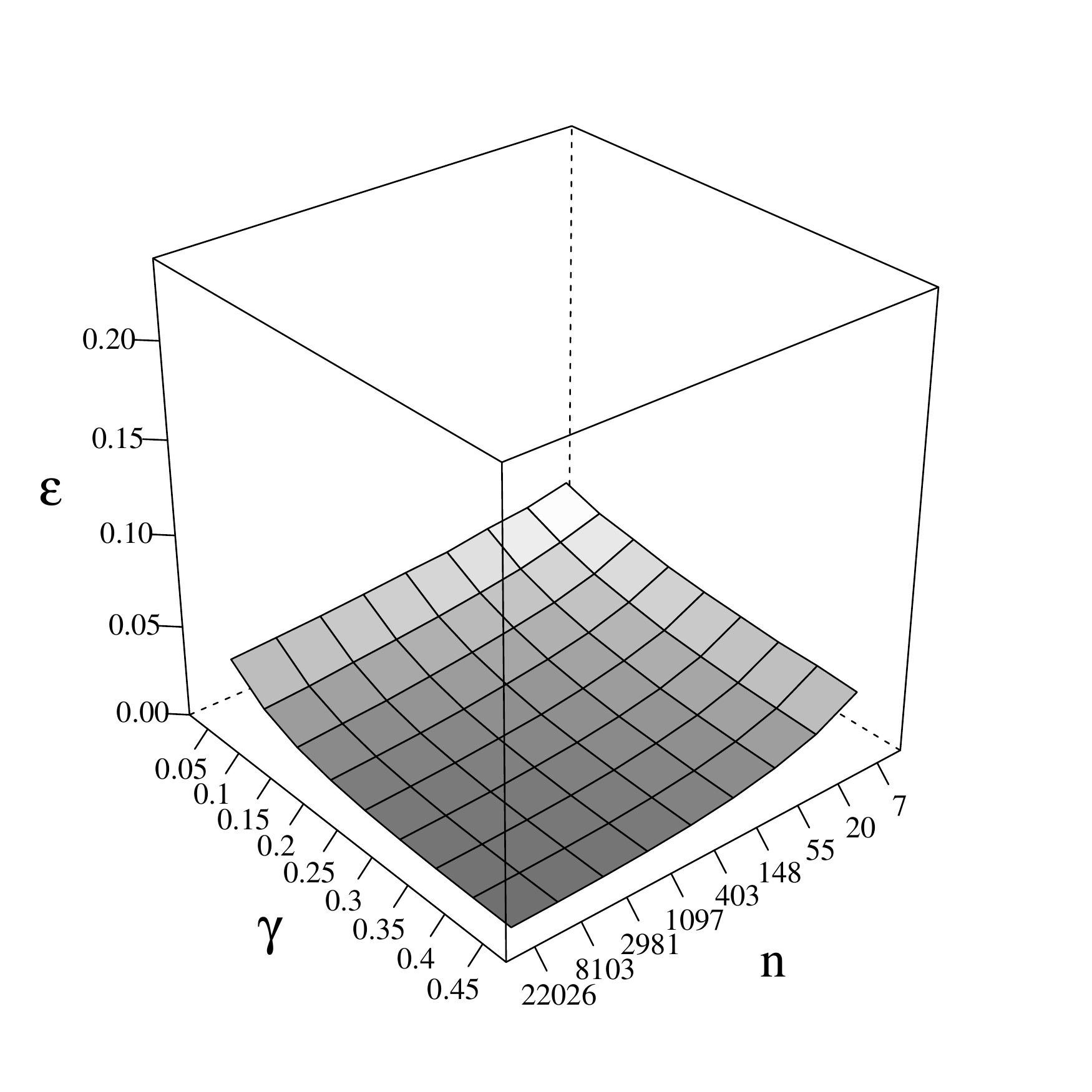}
		\includegraphics[width=0.45\textwidth]{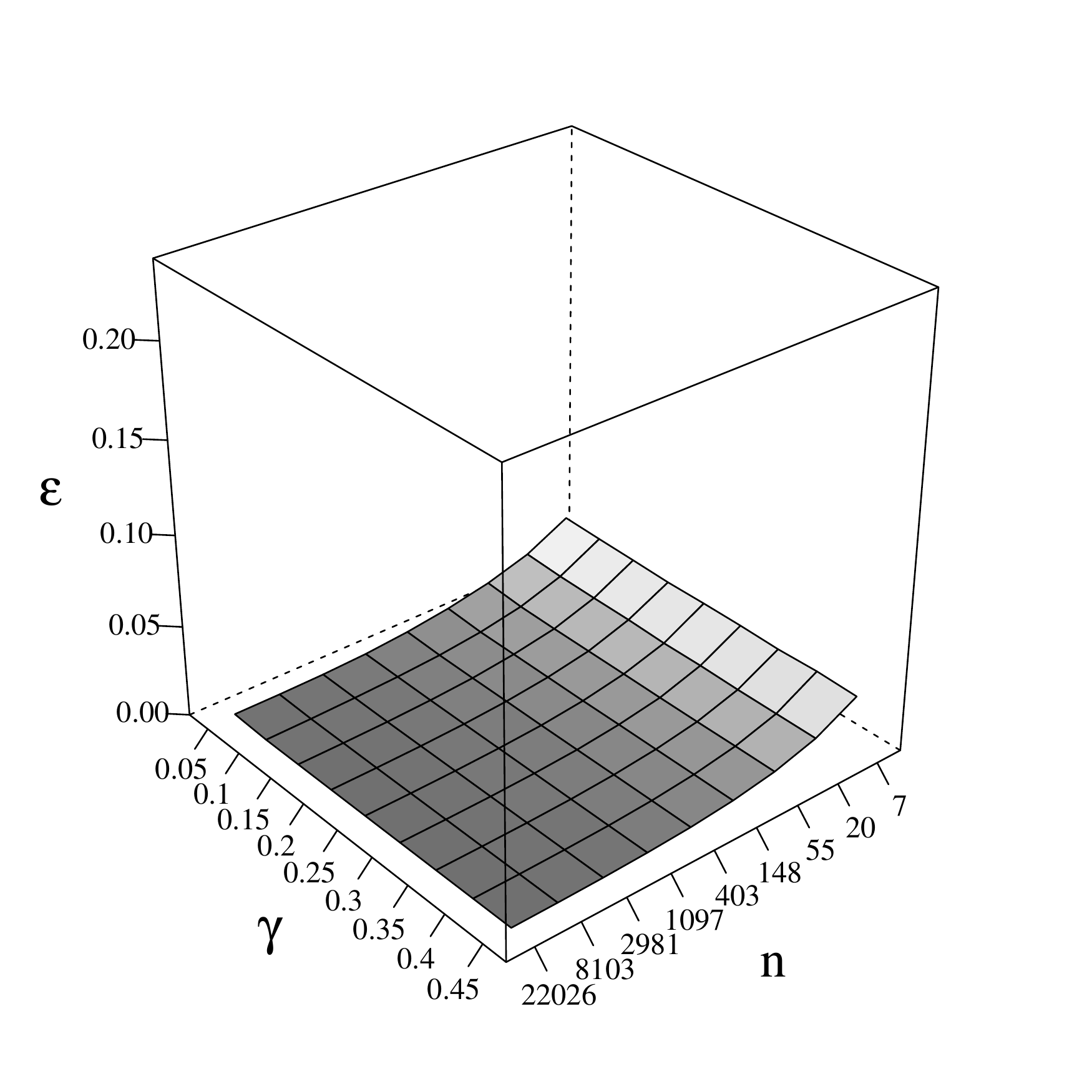}
		
		\vspace{-2.ex}
	\caption{Mean absolute error $\varepsilon$ of the off-diagonal element of the SSCM at distribution $F_{\gamma,p}$ for $p = 2$ (top row) and $p = 10$ (bottom row) and different values of $\gamma$ and $n$. Location estimated by mean (left) and spatial median (right).}
	\label{fig:3}
\end{figure}

In conclusion, we note that in all our simulations, the SSCM with the spatial median was non-inferior to the SSCM with the sample mean, with clear superiority in the case of the $t_2$ distribution and the singularity distribution $F_{\gamma}$. The use of the spatial median as location estimator is advisable and clearly preferable to the  mean. Besides the conceptual kinship of the SSCM and the spatial median and their common good robustness properties, the spatial median is also advantageous in the case of high mass concentration at the center.

All simulations were done in R 2.11.1 \citep{R} using the packages mvtnorm \citep{mvtnorm} for generating elliptical distributions and pcaPP \citep{pcaPP} for computing the spatial median.

\appendix
\section{Proofs}
In the following proofs, we let, without loss of generality, $t = 0$, and throughout write $\Sn(t_n)$ short for $\Sn(\X_n,t_n)$, $\Sn$ for $\Sn(\X_n,0)$ and $S$ for $S(F,0)$. The assumption $E|X|^{-1} < \infty$ is part of all three theorems, which implies $P(X=0) = 0$. Thus, we restrict our attention to the case $X_i \neq 0$ for all $i \in \N$, but note that, with some notational effort, we can generalize the results to distributions having an atom at the origin. 
We use $\|\cdot\|$ to denote the Frobenius norm of a real matrix, i.e., $\|A\| = \{ \trace(A^T A) \}^{1/2}$ for any $A \in \R^{m\times n}$. Letting $\vec A$ denote the $m n$-vector obtained by stacking the columns of $A \in \R^{m \times n}$ from left to right underneath each other, we have $\|A\| = |\vec A|$. 


\begin{proof}[Proof of Theorem \ref{th:cons}]
First we treat weak consistency. We show $\|\Sn(t_n)-\Sn\| \cip 0$ for $n\to \infty$. Associated with the random vector $t_n$ is the following random partition of the space $\R^p$, 
\[
	\calA_n = \left\{ \,  x \in \R^p \ \middle| \ \,  |x-t_n| \ge \textstyle \frac{1}{2} |x| \, \right\}, \qquad 
	\calA_n^C = \left\{ \,  x \in \R^p \ \middle| \ \,  |x-t_n| < \textstyle \frac{1}{2} |x| \, \right\},
\]
and the random partition 
$\calI_n =   \{  1 \le i \le n \, | \,  X_i \in \calA_n \}$, \ 
$\calI_n^C = \{  1 \le i \le n \,  | \,  X_i \in \calA_n^C \}$ 
of the index set $\{1,\ldots,n\}$.
Letting $\Gamma_i = s(X_i)s(X_i)^T$ and $\Gamma_i(t_n) = s(X_i-t_n)s(X_i-t_n)^T$, $i \in \N$, we have
\[
	\|\Sn(t_n)-\Sn\| 
	\le \frac{1}{n}\sum_{i=1}^n \| \Gamma_i(t_n) - \Gamma_i\| 
	= \frac{1}{n}\sum_{i \in \calI_n} \| \Gamma_i(t_n) - \Gamma_i\|
	    + \frac{1}{n}\sum_{i \in \calI_n^C} \| \Gamma_i(t_n) - \Gamma_i\|.
\] 
Call the first sum on the right-hand side $C_n$ and the second sum $D_n$. We show convergence to zero in probability for the random variables $C_n$ and $D_n$ separately, starting with $C_n$. For $X_i \neq 0$ and $X_i \neq t_n$ we have $\|\Gamma_i\| = \|\Gamma_i(t_n)\| = 1$ and 
\[
	\trace\left\{ \Gamma_i^T \Gamma_i(t_n) \right\} \ = \ 
	2\frac{|X_i|^2|t_n|^2-(X_i^Tt_n)^2}{|X_i-t_n|^2|X_i|^2} 
\]
and consequently
\[
	\| \Gamma_i(t_n) - \Gamma_i \| = 
	\frac{\left\{ 2 \left( |X_i|^2 |t_n|^2 - (X_i^T t_n )^2 \right) \right\}^{1/2}}{ |X_i| \, |X_i - t_n| },
\]
which, for $i \in \calI_n$, is bounded from above by $\sqrt{2} |t_n|/|X_i|$. Hence
\[
	C_n \le \frac{1}{n} \sum_{i \in \calI_n} \frac{\sqrt{2} |t_n|}{|X_i|}
	\le \frac{1}{n} \sqrt{2} |t_n| \sum_{i=1}^n |X_i|^{-1}
  = \sqrt{2}\, \bigg[ n^\alpha |t_n| \bigg] \, \left[ \frac{1}{n^{1+\alpha}} \sum_{i=1}^n |X_i|^{-1} \right].
\]
If $\alpha = 0$, the first factor in brackets converges in probability to zero by assumption (\ref{num:cons:II}) of Theorem \ref{th:cons}, and the second factor converges almost surely to $E|X|^{-1}$ by Kolmogorov's strong law of large numbers. If $\alpha > 0$, then the first factor is bounded in probability and the second factor converges to zero almost surely by Marcinkiewicz's strong law of large numbers \citep[e.g.][p.~255]{loeve:1977}.

It remains to show $D_n \cip 0$. Noting that always $\| \Gamma_i(t_n) - \Gamma_i\| \le 2$, we find that 
\be \label{eq:Dn}
	D_n/2 \le \frac{1}{n} \sum_{i=1}^n \varind{\calA_n^C}(X_i).
\ee
We prove that the right-hand side converges to zero in probability by showing its $L_1$ convergence. Let $\varepsilon > 0$. Since $E|X|^{-1} < \infty$, there is a $\delta$-ball $B_\delta = \{ x \in \R^p \mid |x| \le \delta\}$ around 0 with $F(B_\delta) < \varepsilon/2$, where $F$ denotes the distribution of $X$. Further, since $t_n \cip 0$, and $|t_n| \le \delta/2$ implies $\calA_n^C \subset B_\delta$, there is an $n_0 \in \N$ such that $P(\calA_n^C \subset B_\delta) \ge 1 - \varepsilon/2$ for all $n \le n_0$. Thus, for every fixed $i \in \N$ and all $n \le n_0$, we have
\begin{eqnarray*}
	P(X_i \in \calA_n^C) & = &  P(X_i \in \calA_n^C, \calA_n^C \subset B_\delta) \ + \ P(X_i \in \calA_n^C, \calA_n^C \nsubset B_\delta) \\
	& \le &  P(X_i \in B_\delta) \ + \ P(\calA_n^C \nsubset B_\delta) \ \le \varepsilon, 
\end{eqnarray*} 
and finally $ E \{ n^{-1} \sum_{i=1}^n \varind{\calA_n^C}(X_i) \} \le n^{-1} \sum_{i=1}^n P(X_i \in \calA_n^C) \le \varepsilon$ for all $n \ge n_0$. Thus, the right-hand side of (\ref{eq:Dn}) converges in $L_1$, and hence in probability, to zero, and so does $D_n$. The proof of weak consistency is complete. 

As for strong consistency, we treat $C_n$ completely analogously. For $D_n$, we show, as before, that the right-hand side of (\ref{eq:Dn}) converges to zero, but now almost surely. Let, again, $\varepsilon > 0$ be arbitrary. Further, let $Z_{n,i} = \varind{\calA_n^C}(X_i)$ and $\tZ_{\delta,i} = \varind{B_{\delta}}(X_i)$, where, as before, $B_\delta$ denotes the $\delta$-ball around 0. We chose $\delta > 0$ such that $F(B_\delta) < \varepsilon$. We use $(\cdot)_+$ to denote the non-negative part, i.e., $(x)_+ = x$ for $x > 0$ and $(x)_+ = 0$ for $x \le 0$. Then
\[
	\frac{1}{n} \sum_{i=1}^n Z_{n,i}
	\ \le \  \frac{1}{n} \sum_{i=1}^n \tZ_{\delta,i} 
						\ + \ \frac{1}{n} \sum_{i=1}^n \left(Z_{n,i} - \tZ_{\delta,i} \right)_+ 		 
\]
The first summand converges to $F(B_\delta)$, which is smaller than $\varepsilon$. For the second summand, we note that $|t_n| \le \delta/2$ implies $(Z_{n,i} - \tZ_{\delta,i})_+ = 0 $ for all $i \in \N$. Thus, since $|t_n| \asc 0$, we have for almost all $\omega \in \Omega$ that there is an $n_0 \in \N$ such that 
$n^{-1} \sum_{i=1}^n ( Z_{n,i}(\omega) - \tZ_{\delta,i}(\omega) )_+ = 0$ for all $n \ge n_0$. Hence $D_n$ converges almost surely to zero, and we have proven strong consistency.
\end{proof}


When dealing with the spatial sign covariance matrix thoroughly, the possibility of multiple instances of $X_i = t_n$ causes some nuisance. In the proof of Theorem \ref{th:cons}, this is covered implicitly, since $X_i = t_n$ implies $i \in \calI^C_n$. For Theorems \ref{th:an1} and \ref{th:an2}, we state the following lemma.
\begin{lemma} \label{lem:1}
Under conditions (\ref{num:an1:I}) and (\ref{num:an1:II}) of Theorem \ref{th:an1}, $(n-n^*)/\sqrt{n} \cip 0$, where $n^*$ is defined in Section \ref{sec:intro}.
\end{lemma}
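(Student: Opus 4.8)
The plan is to prove the stronger statement that $(n-n^*)/\sqrt n \to 0$ in $L_1$; since $n-n^* \ge 0$, this immediately yields $\cip 0$ by Markov's inequality. Working under the convention $t=0$, the number of data points coinciding with the estimate is $n - n^* = \sum_{i=1}^n \ind{X_i = t_n}$, so it suffices to show
\[
	E(n-n^*) \ = \ \sum_{i=1}^n P(X_i = t_n) \ = \ o(\sqrt n).
\]

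The device I would use to avoid dealing with the dependence between $t_n$ and $X_i$ is a union bound over a ball. On the event $\{X_i = t_n\}$ one has $|X_i| = |t_n|$, so for any radius $r>0$ a split according to whether $|X_i| \le r$ gives, on the complementary part, $|t_n| = |X_i| > r$. Consequently
\[
	P(X_i = t_n) \ \le \ P(|X_i| \le r) + P(|t_n| > r) \ = \ F(B_r) + P(|t_n| > r),
\]
where $B_r = \{x \in \R^p \mid |x| \le r\}$. The point is that each term can now be controlled by one of the two moment hypotheses in isolation, with no independence needed.

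For the first term I would use assumption (\ref{num:an1:II}): since $|X| \le r$ forces $|X|^{-3/2} \ge r^{-3/2}$, we obtain $F(B_r) \le r^{3/2}\, E|X|^{-3/2} = O(r^{3/2})$. For the second term, Markov's inequality together with assumption (\ref{num:an1:I}) gives $P(|t_n|>r) \le r^{-4}\, E|t_n|^4 = O(n^{-2})\, r^{-4}$. Summing over $i$ and dividing by $\sqrt n$ then yields
\[
	\frac{E(n-n^*)}{\sqrt n} \ \le \ \sqrt n\, F(B_r) + O(n^{-3/2})\, r^{-4} \ = \ O\big(n^{1/2} r^{3/2}\big) + O\big(n^{-3/2} r^{-4}\big).
\]

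It remains to calibrate the radius, which is the only genuinely delicate step. Taking $r = r_n = n^{-a}$ turns the bound into $O(n^{1/2 - 3a/2}) + O(n^{-3/2 + 4a})$, and both exponents are negative exactly when $1/3 < a < 3/8$. This interval is nonempty (for instance $a = 7/20$ works), so with such a choice both terms tend to $0$ and the lemma follows. I expect the main obstacle to be precisely this balancing: the exponent $3/2$ in (\ref{num:an1:II}) is what makes the admissible window $\big(1/(2\cdot\tfrac32),\, 3/8\big) = (1/3,\,3/8)$ nonempty against the fourth-moment rate of $t_n$ in (\ref{num:an1:I}), so both conditions enter the argument essentially sharply.
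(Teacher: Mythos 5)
Your proof is correct and is essentially the paper's own argument: the same union bound $\{X_i=t_n\}\subseteq\{|X_i|\le r\}\cup\{|t_n|>r\}$, the same two Markov inequalities from conditions (\ref{num:an1:I}) and (\ref{num:an1:II}), and the same calibration of the radius — your window $a\in(1/3,3/8)$ for $r=n^{-a}$ is exactly the paper's $\alpha\in(1/8,1/6)$ for $r=n^{\alpha-1/2}$ under the substitution $a=1/2-\alpha$.
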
 
\begin{proof}
Recall that throughout the appendix, we let $t=0$. Let $A = \sup_n E(\!\sqrt{n} |t_n|)^4$, $B = E(|X|^{-3/2})$  and $1/8 < \alpha < 1/6$. With Markov's inequality we have with condition (\ref{num:an1:I}) of Theorem \ref{th:an1}, 
$P(|t_n| > n^{\alpha-1/2}) = P(\sqrt{n}|t_n| > n^\alpha)     \le A/n^{4\alpha}$, and with condition (\ref{num:an1:II}), 
$P( |X| < n^{\alpha-1/2}) = P(|X|^{-1}     > n^{1/2-\alpha}) \le B n^{3\alpha/2 - 3/4 }$. Thus
\[
	E\left[ \frac{n - n^*}{\sqrt{n}} \right] 
	\ = \ \frac{1}{\sqrt{n}} \sum_{i=1}^n E \Ind{X_i = t_n} 
	\ = \ 
	\frac{1}{\sqrt{n}} \sum_{i=1}^n  E \left[ \Ind{  X_i = t_n,\, |t_n| >   n^{\alpha-1/2} } \right]
	+ 
	\frac{1}{\sqrt{n}} \sum_{i=1}^n  E \left[ \Ind{  X_i = t_n,\, |t_n| \le n^{\alpha-1/2} } \right]
\]\[
	\le
	\frac{1}{\sqrt{n}} \sum_{i=1}^n  E \left[ \Ind{ |t_n| > n^{\alpha-1/2} } \right]
	+ 
	\frac{1}{\sqrt{n}} \sum_{i=1}^n  E \left[ \Ind{  |X_i| \le n^{\alpha-1/2} } \right]
	\ \le  \
	A n^{-4\alpha + 1/2} 	+  B n^{3\alpha/2 - 1/4}, 
\]
which converges to zero since $1/6 < \alpha < 1/8$. 
\end{proof}


\begin{proof}[Proof of Theorem \ref{th:an1}]
We show $\sqrt{n} \{\Sn(t_n)-\Sn\}\cip 0$ for $n\to \infty$. Due to Lemma \ref{lem:1} we may assume without loss of generality that $X_i \neq t_n$ for all $n, i \in \N$. Let, as before, $\Gamma_i = s(X_i)s(X_i)^T$ and $\Gamma_i(t_n) = s(X_i-t_n)s(X_i-t_n)^T$.
Using the identity
\be \label{eq:key_id}
 	 \Gamma_i(t_n) \ = \ 
 	  \Gamma_i  \ + \  |X_i|^{-2} \cdot \left( t_n t_n^T - X_i t_n^T - t_n X_i^T  \right) \ +\   \frac{ 2 X_i^T t_n  -t_n^T t_n}{|X_i|^2} \cdot \Gamma_i(t_n)
\ee
we have
\[
	\sqrt{n} \{\Sn(t_n)-\Sn\} \ = \ \frac{1}{\sqrt{n}}\sum_{i=1}^n \left( \Gamma_i(t_n) - \Gamma_i  \right)
\]
\[  
	= \ \frac{1}{\sqrt{n}}\sum_{i=1}^n\frac{t_n t_n^T}{|X_i|^2} 
	 \, - \, \frac{1}{\sqrt{n}}\sum_{i=1}^n\frac{X_i t_n^T}{|X_i|^2}
	 \, - \, \frac{1}{\sqrt{n}}\sum_{i=1}^n\frac{t_n X_i^T}{|X_i|^2}
	 \, + \, \frac{1}{\sqrt{n}}\sum_{i=1}^n \frac{2X_i^T t_n}{|X_i|^2}\Gamma_i(t_n)
	 \, - \, \frac{1}{\sqrt{n}}\sum_{i=1}^n \frac{t_n^T t_n}{|X_i|^2}\Gamma_i(t_n).
\]
Call the summands $A_n$, $B_n$, $C_n$, $D_n$ and $E_n$ from left to right. We will show convergence in probability to zero for each summand. 
For $B_n$ and $C_n$ we have
\[
	\|B_n\|\ =\ \|C_n\| 
	\ \leq \ 
	\left|	\sqrt{n}\, t_n \right| \cdot \left| \frac{1}{n}\sum_{i=1}^n \frac{X_i}{|X_i|^2} \right|
\]
The first factor is bounded in probability by assumption (\ref{num:an1:I}), and the second tends to 0 almost surely because of Kolmogorov's strong law of large numbers and assumption (\ref{num:an1:III}). 
For $A_n$ we have
\[
	\| A_n \|  \ \leq  \| n\, t_n t_n^T \| \cdot \left( \frac{1}{n^{3/2}} \sum_{i=1}^n {\frac{1}{|X_i|^2}} \right).
\]
Marcinkiewicz's strong law of large numbers guarantees convergence of the second factor to 0 almost surely as long as $E(|X|^{-2})^{2/3} < \infty$, which is fullfiled by assumption (\ref{num:an1:II}). 
Since $\|\Gamma_i(t_n)\| = 1$, the same argument applies to $E_n$.  
For $D_n$, we apply again identity (\ref{eq:key_id}) and obtain 
\begin{eqnarray*}
	D_n 
	  & = & \frac{2}{\sqrt{n}}\sum_{i=1}^n  \frac{X_i^T t_n}{|X_i|^4}\, X_i X_i^T
		\ + \ \frac{2}{\sqrt{n}}\sum_{i=1}^n  \frac{X_i^T t_n}{|X_i|^4}\, t_n t_n^T
		\ - \ \frac{2}{\sqrt{n}}\sum_{i=1}^n  \frac{X_i^T t_n}{|X_i|^4}\, t_n X_i^T \\
		& - & \frac{2}{\sqrt{n}}\sum_{i=1}^n  \frac{X_i^T t_n}{|X_i|^4}\, X_i t_n^T
		\ +	\ \frac{4}{\sqrt{n}}\sum_{i=1}^n  \frac{\left(X_i^T t_n\right)^2}{|X_i|^4}\, \Gamma_i(t_n)
		\ - \ \frac{2}{\sqrt{n}}\sum_{i=1}^n  \frac{\left(X_i^T t_n\right)\left( t_n^T t_n\right)}{|X_i|^4}\, \Gamma_i(t_n).
\end{eqnarray*}
Thus $D_n$ can be split up into six sums, which we denote by $D_{1,n},\ldots,D_{6,n}$ from left to right in the formula above. The matrix ${D_1}_n$ is best shown to converge to zero in probability by treating it element-wise. The $(l,m)$ element of $D_{1,n}$, $1 \le l,m \le p$, can be written as 
\[
	\left(D_{1,n}\right)_{l,m} \ = \ 
	2 \sum_{k=1}^p 
	\left( 	\sqrt{n}\, t_n^{(k)} \right) \cdot 
	\left( \frac{1}{n }\sum_{i=1}^n \frac{X_i^{(l)} X_i^{(m)} X_i^{(k)}} {|X_i|^4} \right).
\]
In each of the $p$ summands, the first factor is bounded in probability by assumption (\ref{num:an1:I}), and the second converges to zero almost surely, due to assumption (\ref{num:an1:III}) and Kolmogorov's strong law of large numbers. The remaining terms $D_{2,n},\ldots,D_{6,n}$ are shown to converge to zero in probability by Marcinkiewicz's strong law of large numbers. For the matrix norms of $D_{3,n}$, $D_{4,n}$ and $D_{5,n}$ we find the same upper bound as for $A_n$ above.  For $D_{2,n}$, we obtain the upper bound
\[
	\| D_{3,n} \| \ \le \ 2 n^{3/2}|t_n|^3 \cdot \left( \frac{1}{ n^2 } \sum_{i=1}^n {\frac{1}{|X_i|^3}} \right),
\]
where the first factor is again bounded in probability and the second converges to zero by Marcinkiewicz's strong law of large numbers, since by assumption (\ref{num:an1:III}), $E(|X|^{-3})^{1/2} < \infty$. The same argument applies to $D_{6,n}$, and the proof is complete. 
\end{proof}

			
\begin{proof}[Proof of Theorem \ref{th:an2}]
We use the same notation and the same decomposition of $\sqrt{n} \{\Sn(t_n)-\Sn\}$ as in the proof of Theorem \ref{th:an1}, and also assume that $X_i \neq t_n$ for all $n, i \in \N$. Of the altogether ten summand this expression has been split up into, seven, namely $A_n$, $D_{2,n}$, $D_{3,n}$, $D_{4,n}$, $D_{5,n}$, $D_{6,n}$, $E_n$ (those terms that contain $t_n$ in a multiplicity higher than 1), are as before shown to converge to zero by means of Marcinkiewicz's strong law of large numbers. We denote these seven terms by $R_n(\X_n,t_n)$ and focus on the remaining three, $B_n$, $C_n$, and $D_{1,n}$. We have 
\[
	\sqrt{n}\vec\, \{\Sn(t_n) - S\} 
	\, = \, 
	\sqrt{n} \vec\, \{\Sn- S\}
	 +  
	\vec\left\{
			\frac{2}{\sqrt{n}}\sum_{i=1}^n \frac{X_i^T t_n}{|X_i|^4} X_i X_i^T
		-	\frac{1}{\sqrt{n}}\sum_{i=1}^n \frac{X_i t_n^T + t_n X_i^T}{|X_i|^2}
		+ R_n(\X_n,t_n)
	\right\}
\]
\begin{eqnarray*}
	& = & 
	\sqrt{n} \vec\, \{\Sn- S\}
	 \ + \  \left\{ \frac{2}{n} \sum_{i=1}^n \frac{(X_i \otimes X_i) X_i^T }{|X_i|^4}\right\} \left(\sqrt{n}\, t_n\right) 
	 \ - \ \left(\sqrt{n}\, t_n\right) \otimes \left\{ \frac{1}{n} \sum_{i=1}^n \frac{X_i}{|X_i|^2}\right\} \\
	&   & 
	 \ - \ \left\{ \frac{1}{n} \sum_{i=1}^n \frac{X_i}{|X_i|^2}\right\} \otimes \left(\sqrt{n}\, t_n\right)
	 \ + \ \vec R_n(\X_n,t_n).
\end{eqnarray*}
This has the same asymptotic distribution as 
\[
\sqrt{n} \vec\, \{\Sn - S\}
	 \ + \  2 E \left\{\frac{(X \otimes X) X^T }{|X|^4}\right\} \left(\sqrt{n}\, t_n\right) 
	 \ - \ \left(\sqrt{n}\, t_n\right) \otimes E \left\{ \frac{X}{|X|^2}\right\} 
	 \ - \ E \left\{ \frac{X}{|X|^2}\right\} \otimes \left(\sqrt{n}\, t_n\right),
\]
which is a linear function of the random $p(p+1)$ vector $\xi_n =  \sqrt{n} ( t_n,  \vec\, \{\Sn - S\} )$. This function is given by $A \xi_n$ with $A$ being defined in Theorem \ref{th:an2}. Hence, by the continuous mapping theorem, $\sqrt{n}\vec\, \{\Sn(t_n) - S\} $ converges in distribution to $N_{p(p+1)}(0, A \Xi A^T)$, and the proof is complete. 
\end{proof}


\section*{Acknowledgment} The authors are very thankful to Roland Fried for many fruitful research discussions and a thorough proofreading of the manuscript. We are also indebted to the referee and the associate editor, whose thoughtful comments helped to improve the revised version of the article. Alexander D\"urre and Daniel Vogel were supported in part by the Collaborative Research Grant 823 
of the German Research Foundation. David Tyler was supported in part by the NSF Grant DMS-0906773.


\bibliographystyle{abbrvnat}

\end{document}